\numberwithin{equation}{section}
\newcommand\tRR{\widetilde{\mathbb R}}
\renewcommand\d{\partial}
\newcommand\R{\mathbb R}
\newcommand\C{\mathbb C}
\newcommand\Z{\mathbb Z}
\newcommand\N{\mathbb N}
\def\eps{\varepsilon}
\newcommand\br{\begin{remark}}
\newcommand\er{\end{remark}}
\newcommand\bp{\begin{pmatrix}}
\newcommand\ep{\end{pmatrix}}
\newcommand\be{\begin{equation}}
\newcommand\ee{\end{equation}}
\newcommand\ba{\begin{equation}\begin{aligned}}
\newcommand\ea{\end{aligned}\end{equation}}
\newcommand{\bap}{\begin{app}}
\newcommand{\eap}{\end{app}}
\newcommand{\begs}{\begin{exams}}
\newcommand{\eegs}{\end{exams}}
\newcommand{\beg}{\begin{example}}
\newcommand{\eeg}{\end{exaplem}}
\newcommand{\bpr}{\begin{proposition}}
\newcommand{\epr}{\end{proposition}}
\newcommand{\bt}{\begin{theorem}}
\newcommand{\et}{\end{theorem}}
\newcommand{\bc}{\begin{corollary}}
\newcommand{\ec}{\end{corollary}}
\newcommand{\bl}{\begin{lemma}}
\newcommand{\el}{\end{lemma}}
\newcommand{\bd}{\begin{definition}}
\newcommand{\ed}{\end{definition}}
\newcommand{\brs}{\begin{remarks}}
\newcommand{\ers}{\end{remarks}}
\newtheorem{theo}{Theorem}[section]
\newtheorem{exams}[theo]{Examples}
\numberwithin{equation}{section}
\newcommand{\Id}{{\rm Id }}
\newcommand{\diag}{{\rm diag }}
\newtheorem{theorem}{Theorem}[section]
\newtheorem{proposition}[theorem]{Proposition}
\newtheorem{corollary}[theorem]{Corollary}
\newtheorem{lemma}[theorem]{Lemma}
\newtheorem{definition}[theorem]{Definition}
\newtheorem{example}[theorem]{Example}
\newtheorem{remark}[theorem]{Remark}
\newcommand\cE{{\mathcal  E}}
\newcommand\cO{{\mathcal O}}
\newcommand\cM{{\mathcal M}}
\newcommand{\lot}{\ell.o.t.}
\DeclareMathOperator{\dD}{d}
\DeclareMathOperator\eD{e}
\DeclareMathOperator\iD{i}
\DeclareMathOperator\sign{sign}
\DeclareMathOperator\Tr{Tr}
\title
[Linear damping estimates for inviscid roll waves]
{Linear damping estimates for periodic roll wave solutions of 
the inviscid Saint-Venant equations and related systems of hyperbolic balance laws}
\author{L.~Miguel Rodrigues}
\address{Univ Rennes, CNRS, IRMAR - UMR 6625, F-35000 Rennes, France}
\email{luis-miguel.rodrigues@univ-rennes1.fr}
\thanks{Research of L.M.R. was partially supported by the ANR Project HEAD ANR-24-CE40-3260 and the Institut Universitaire de France.}
\author{Kevin Zumbrun}
\address{Indiana University, Bloomington, IN 47405}
\email{kzumbrun@indiana.edu}
\thanks{Research of K.Z. was partially supported
under NSF grants no. DMS-1400555 and DMS-1700279}
\begin{document}

\begin{abstract}
	Substantially extending previous results of the authors for smooth solutions in the viscous case, 
	we develop linear damping estimates for periodic roll-wave solutions 
	of the inviscid Saint-Venant equations and related systems of hyperbolic balance laws.
	Such damping estimates, consisting of $H^s$ energy estimates yielding exponential slaving of 
	high-derivative to low-derivative norms, have served as crucial ingredients in nonlinear stability
	analyses of traveling waves in hyperbolic or partially parabolic systems, both in obtaining
	high-frequency resolvent estimates and in closing a nonlinear iteration for which available linearized
	stability estimates apparently lose regularity.
	Here, we establish for systems of size $n\leq 6$ a Lyapunov-type theorem stating that such energy 
	estimates are available whenever strict high-frequency spectral stability holds;
	for dimensions $7$ and higher, there may be in general a gap between high-frequency spectral 
	stability and existence of the type of energy estimate that we develop here.
	A key ingredient is a dimension-dependent linear algebraic lemma reminiscent of 
	Lyapunov's Lemma for ODE that is to our knowledge new.

\vspace{0.5em}

{\small \paragraph {\bf Keywords:} periodic traveling waves; hyperbolic balance laws; energy estimates;
high-frequency spectral asymptotics; linear algebra; shallow water equations; roll waves.
}

\vspace{0.5em}

{\small \paragraph {\bf AMS Subject Classifications:}  35B35, 35L67, 15A18, 15A42, 35Q35, 35P15, 15A60, 15A63.
}
\end{abstract}

\maketitle


\setcounter{tocdepth}{1}
\tableofcontents

\section{Introduction}\label{s:intro}

In the present contribution, we construct linear damping estimates for roll wave solutions of the Saint-Venant equations (SV) for inclined shallow-water flow, or, more generally, discontinuous periodic traveling waves of a general system of hyperbolic balance laws
\be\label{blaw}
\d_t(f_0(W))+ \d_x(f(W))=R(W), \qquad W\in \R^n.
\ee

Roll waves are potentially harmful periodic wave trains forming in canals or other channel flow under situations of ``hydrodynamic instability,'' that is, when constant-height laminar flow becomes unstable.  Oscillating in amplitude, roll waves can have substantially larger maximum fluid height than a corresponding laminar flow carrying the same flow, leading to spillage or possible damage. This, along with their dramatic nature, has led to considerable interest in their existence and stability, or ``permanence'' as persistent waves. See, e.g., \cite{Je,Dr,Br1,Br2} for further discussion.

Stability of smooth roll wave solutions of the  ``viscous'' version of the Saint-Venant equations has been in principle completely resolved in \cite{JZN,JNRZ13,RZ,BJNRZ2}, with, on one hand, the identification of spectral conditions under which one obtains both nonlinear stability and a detailed description of asymptotic behavior, and on the other hand the study of those spectral conditions by a combination of thorough numerical computations to derive complete stability diagrams and rigorous near-onset asymptotic analyses in regimes not accessible by numerics.

However, as noted in \cite{BJNRZ2,JNRYZ}, there is considerable advantage to working with the inviscid equations in understanding behavior. In particular, a useful power-law description of stability obtained in \cite{BJNRZ2} for the regime relevant to hydraulic engineering corresponds effectively to an inviscid limit, requiring intensive computational resources to resolve in the viscous setting. By contrast, in the inviscid spectral stability analysis of \cite{JNRYZ}, the corresponding stability boundaries could be found with orders of magnitude fewer computations; in particular, the low-frequency stability boundary was obtained explicitly, as the solution of a cubic equation in the model parameters. And, indeed, it is the inviscid equations that appear to be the industry standard in hydraulic engineering.

These considerations motivate the study of linearized and nonlinear stability in the original inviscid, quasilinear hyperbolic form \eqref{blaw}, despite the technical difficulties, among others, of {discontinuity of the background wave}, {lack of parabolic smoothing} and {presence of characteristic points} in the equations. Such analyses have been carried out in the scalar case in \cite{DR1,DR2,GR} (for which however periodic solutions are always unstable) and for front-type solutions of some general systems including the Saint-Venant equations in \cite{YZ,FR1,FRYZ,FR2}. As a bridge between the viscous and inviscid worlds we point out that the asymptotic stability result of \cite{Blochas-Rodrigues} are uniform with respect to viscosity.

Here, generalizing analysis of \cite{RZ} in the viscous case, we develop for periodic solutions of (SV) and related systems \eqref{blaw}, under the assumption of strict high-frequency spectral stability, a Lyapunov-type {\it linear damping estimate} yielding exponential slaving of higher- to lower-derivative Sobolev norms.

Such estimates, in a nonlinear form, are a key ingredient in the study of nonlinear stability  in situations of delicate regularity, compensating in a nonlinear iteration scheme for apparent derivative loss in linearized estimates used to obtain decay.  See \cite{Z,MZ_large,RZ} and \cite{YZ,FR1,FR2} for examples from 
hyperbolic-parabolic systems and quasilinear hyperbolic systems. Here, we carry out the first, important step of identifying an underlying structure/mechanism by which spectral properties may be translated into a corresponding linear energy estimate. For systems of dimension $n\leq 6$, these estimates are seen to be sharp in the sense that they may be obtained whenever there holds the (necessary) condition of high-frequency spectral stability. For systems of higher dimension $n\geq 7$, we conjecture but have not established definitely that there is a gap between the conditions for high-frequency spectral stability and existence of a linear energy estimate of the type we construct here. For the artificial complex analogs of these conditions, we prove on one hand that no gap exists when $n\leq 4$ whereas by explicit example we show that such a gap does exist in dimensions $n\geq 5$. 

As analyzed in details in \cite[Appendix~A]{FR1} we stress that if one were willing to derive a full damping estimate, that is a damping of the whole solution, not only its high-frequency part then a similar gap appears already for the stability of \emph{constant} solutions when the system contains $n=3$ equations, thus essentially always. Hence in the wave-stability theory the need to combine high-frequency damping estimates with more advanced functional-analytic techniques.

Along the way, we encounter an interesting linear algebraic fact, Lemma \ref{linalg_lem} analogous to those underlying the predecessors of our analysis by Lyapunov \cite{Ly} and Kreiss \cite{Kreiss}, but confined to spaces of dimensions three and lower for complex matrices and five and lower for real ones, that is the key observation needed for the extension to systems \eqref{blaw} of size $n>2$. This is in turn closely related to the algebraic geometry question ``what is the largest number of quadratic equations $q_j(x)=0$, $x\in \C^2$, with associated quadratic forms $Q_j$, for which it is guaranteed that either (i) the family $q_j(x)=0$ has a common nontrivial solution, or (ii) there is a linear combination $\sum_j c_j Q_j$ that is positive definite?'' The answer, ``two,'' leads to the bounds of three (complex case) and five (real case) in Lemma \ref{linalg_lem}, the latter in turn leading to the bound $n\leq 6$ for system \eqref{blaw}.

The above observations lead to sharp high-frequency treatment of roll waves in systems \eqref{blaw} of dimension 
$6$ and lower, notably the Saint-Venant equations (SV) and the ($3\times 3$ version of the) Richard-Gavrilyuk model (RG) recently introduced in \cite{RG1} as a refinement incorporating effects of turbulent vorticity. For systems in dimension seven and higher, our methods are not guaranteed to work whenever high-frequency stability holds, but, due to the above-mentioned theoretical gap, only under the stronger condition for existence of an energy estimate. In practice, however, our stronger condition may well be sufficient, as this gap appears to occur rather infrequently among randomly chosen systems, and when it does occur is not large; see Remark~\ref{gaprmk}.

\subsection{Reader's guide}\label{s:overview}
The underlying principle for our analysis, originating in \cite{Z,MZ_large} and greatly extended in \cite{RZ,FR1,FR2} and elsewhere, is that strict high-frequency spectral estimates should be related to high-frequency damping, through resolvent estimates obtained by the same (WKB-type or other) estimates. That is, high-frequency damping is a restricted, high-frequency version of the type of energy estimates obtained by Lyapunov \cite{Ly} for initial value problems in ODE and Kreiss \cite{Kreiss} for initial boundary value problems in hyperbolic PDE. So, what we are really trying to demonstrate is a Kreiss symmetrizer type estimate (or Lyapunov lemma, in case of ODE), showing that such an estimate may be obtained equally by energy methods. The advantage of the energy estimate formulation, of course, is {robustness} under perturbation, in particular extension to the nonlinear setting. In the particular case of damping, this gives a crucial control of regularity as well.

In the present setting, there are some additional complications due to the presence of shock discontinuities, but also some simplifications due to the restriction to high frequencies. To aid the reader, we will try to isolate the main ideas and difficulties here in an informal way, sweeping aside some of the technicalities in the analysis of Section \ref{s:sv}.

Let us first describe the analytical setting. A roll wave by our definition consists of a piecewise smooth periodic traveling wave, with smooth portions separated by shock discontinuities. By considerations of well-posedness, these discontinuities must be of admissible Lax type, with $n+1$ entering characteristics and $n-1$ exiting characteristics from the shock, and Rankine-Hugoniot jump conditions solvable for exiting modes in terms of entering ones.

At the nonlinear level, there are additional complications due to movement of the shock, with modulating
phase shifts $\psi(x,t)$ in $x$ introduced to fix the shocks at their initial location; this is discussed further in Section \ref{s:sv}. However, at the linear level this may more or less be ignored, as phase shifts may be eliminated from the jump conditions, then recovered later. Thus, the reader will lose little by ignoring the discussion of phase shift in Section~\ref{s:sv}.

{\bf Sonic points.} A more immediate complication is the appearance of ``sonic'' or ``characteristic'' points, 
where the hyperbolic characteristic speed is equal to the speed of the traveling wave. Assuming strict hyperbolicity of \eqref{blaw}, as holds for the fluid-dynamical examples we have in mind, hyperbolic characteristic speeds relative to the shock are real, changing sign only at sonic points. But, characteristics entering the shock from the left are the ones of positive relative speed, as are the characteristics exiting the shock on the right, and by the Lax characteristic condition these are of different number. Thus, periodicity implies passage through a sonic point/change of sign of at least one relative characteristic speed, as $x$ passes from the left to the right boundary of one smooth periodic cell.

The effect of a sonic point on the eigenvalue ODE $\lambda A_0 w + (Aw)'= Ew$ is that the principal part $Aw'$ becomes singular, making it a singular ODE problem. See for example the discussions in \cite{N2,JNRYZ, DR2} in the scalar and Saint-Venant setting, and some initial consequences for the spectral problem. A definitive study on the effects of characteristic points has been made in the scalar context in \cite{DR2,GR}, showing also the consequences for resolvent and energy estimates, and we make important use of those ideas here.

{\bf Sonic vs. transverse modes.} The main idea behind the WKB-type high-frequency spectral analysis for roll waves carried out in \cite{JNRYZ} is to carry out a semiclassical limit analysis  first approximately, then exactly, diagonalizing the system into a collection of scalar modes, linked by the boundary conditions at the shocks. The idea here is that the first, approximate diagonalization can be performed in exactly the same way for the linearized evolution equations, substituting for the second step a ``Kawashima-type'' energy estimate eliminating off-diagonal terms. This last step is described in detail in Section \ref{s:sv}; see also related analyses of \cite{MZ_large,RZ,YZ}. However, again, the reader will lose little by ignoring this technical step, and simply taking the equations to be diagonal from the beginning.

This leaves us with a collection of $n$ scalar eigenmodes
\be\label{scalarmodes}
	\partial_t u_j + \alpha_j \partial_x u_j = \gamma_j u_j 
\ee
on each period $[0,X)$, coupled by boundary conditions linking $u_j(X^\pm)$, $n-1$ of which are ``transverse,'' i.e., for which $\alpha_j(x)$ has constant sign, and one distinguished mode for which $\alpha_j(x)$ has a simple zero at a unique sonic point $x_s\in [0,X]$. For all of these, coefficients depend periodically on $x$ alone. These two types present complementary difficulties, and are treated in rather different fashion. And, indeed, there is a mild incompatibility between them, leading to some unfortunate complications in exposition.

Namely, for (noncharacteristic) transverse modes, $\d_t u$ and $\d_x u$ are equivalent in $L^2$ norm up to lower-derivative terms, and so we may use these interchangeably in deriving higher-order estimates. Since $t$-derivatives pass through equations and boundary conditions, it is evident that any estimate satisfied in $L^2$ is satisfied also in $H^s$ for arbitrary $s\geq 1$; that is, we have the usual situation that spectrum is independent of the choice of norm.

For sonic modes on the other hand, as emphasized in \cite{JNRYZ,DR2}, spectrum depends very much on the norm $H^s$;
in particular, they are unstable in $L^2$ but stable in $H^1$; see Remark~\ref{sonicrmk} for further discussion on this point. Thus, we are forced to work in $H^1$ for their analysis, and this means estimating $\d_x u$, since $\d_t u$ and $\d_x u$ are no longer equivalent, $\d_t u\sim \alpha \d_x u$ degrading at the sonic point corresponding to vanishing of $\alpha$. On the good side, a closer inspection of the Lax characteristic conditions at the shocks reveals that characteristics of the sonic mode must exit the periodic cell on either end, hence this mode requires no boundary conditions. Put equivalently, boundary contributions may be expected to be favorable, as indeed they turn out to be. See Section \ref{s:sv} for further details on this point.

As modes are coupled through the boundary conditions, we must work in common coordinates, and so we analyze transverse modes, too, in the $x$-derivative coordinate $\d_x u$ rather than the more natural $\d_t u$. For these coordinates, the form of the principal part of both the interior (differential) equations and the boundary conditions changes, according to the rule 
\be\label{rule}
u \to \alpha u
\ee
imposed by $\d_t u\sim \alpha \d_x u$, somewhat obscuring the simplicity of the underlying argument, in particular the property that $H^s$ and $L^2$ stability go similarly for transverse modes. In reading through Section \ref{s:sv}, it may be helpful for the reader to keep in mind the rule of thumb \eqref{rule} in following the path of the analysis.

\subsection{Plan of the paper}\label{s:plan}
In Section \ref{s:linalg}, we present a Lyapunov-type linear algebraic lemma needed for the $n\times n$ system case, $n>2$. In Section \ref{s:sv}, we carry out in detail the analysis for the $2\times 2$ Saint-Venant equations,
for which both sonic and transverse modes are scalar, and the issues discussed in Section \ref{s:linalg} do not arise. In Section \ref{s:general}, we treat the general $n\times n$ case in a streamlined fashion, omitting technical details in common with the $2\times 2$ case in order to focus on the new issues arising in transverse modes for the system case, and give our main result in Section~\ref{s:general}. Finally, in Section \ref{s:disc}, we briefly discuss perspectives and open problems. The appendices are devoted to additional discussion from Section \ref{s:linalg}.

\medskip

{\bf Acknowledgement:} Thanks to Hari Bercovici for a helpful discussion regarding the linear algebraic lemma of Section \ref{s:linalg}, to Zhao Yang for valuable insights on a preliminary version and to Aric Wheeler for providing initial supporting numerics. L.M.R. thanks Indiana University for its hospitality during two visits funded partly by the mathematics department's short-term research visitor program.

\section{A linear algebraic lemma}\label{s:linalg}
We begin with a linear algebraic lemma that seems of interest in its own right. For a general complex $n\times n$ matrix $B$, denote by $\rho(B)$ its spectral radius, defined as the maximum modulus of its eigenvalues, and $\|B\|$ its $\ell^2$ operator norm, defined as $\max_{|u|=1}|Bu|$. Evidently, $\rho(B)\leq \|B\|$; a classical question of fundamental numerical and theoretical interest is when these two quantities coincide. A classical result is that
\[
\inf_{P\,\textrm{ invertible}}\|P\,B\,P^{-1}\|=\rho(B)\,.
\]

A variant arising in our analysis is whether a combination of scaling transformations $B\mapsto SBS^{-1}$ with $S=\diag\{s_1,\dots,s_n\}$, $s_j$ positive real, and multiplication $B\mapsto UB$ by diagonal unitary matrix $U=\diag\{u_1, \dots, u_n\}$, yields equality, $\rho(SUBS^{-1})= \|SUBS^{-1}\|$, or, more generally (noting that $\|S\|$ might run off to infinity)
\[
\inf_{S,U} \Big( \|SUBS^{-1}\|- \rho(SUBS^{-1})\Big)=0.
\]
Here and elsewhere we always assume that matrices denoted with an $S$ letter are diagonal with positive diagonal entries whereas those denoted with a $U$ are diagonal with diagonal entries of modulus $1$.

The following Lyapunov-type lemma answers this question in the affirmative for lower dimensions $n\leq 3$
but in the negative for higher dimensions $n\geq 4$.

\begin{lemma}\label{linalg_lem} 
\begin{enumerate}
\item For any $B\in\cM_n(\C)$, any $S$, $U$,
\[
\rho(UB)=\rho(SUBS^{-1}) \leq \|SUBS^{-1}\|= \|SBS^{-1}\|\,.
\]
In particular, for any $B\in\cM_n(\C)$,
\[
\inf_{S,U} \Big( \|SUBS^{-1}\|- \rho(SUBS^{-1})\Big)\,=\,
\inf_S \|SBS^{-1}\|-\max_U \rho(UB)\,.
\]
\item When $n\leq 3$ and $B\in\cM_n(\C)$ or when $n\leq 5$ and\footnote{Note that the spectrum is still the complex spectrum.} $B\in\cM_n(\R)$,
\[
\inf_{S,U} \Big( \|SUBS^{-1}\|- \rho(SUBS^{-1})\Big)\,=\,0\,.
\]
\item For any $n\geq 4$, there exists $B\in\cM_n(\C)$ such that
\[
\inf_S \|SBS^{-1}\|>\max_U \rho(UB)\,.
\]
\end{enumerate}
\end{lemma}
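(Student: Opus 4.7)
Part (1) is essentially a direct computation that I would dispatch quickly. Conjugation preserves spectrum, so $\rho(SUBS^{-1}) = \rho(UB)$. For the norm equality $\|SUBS^{-1}\| = \|SBS^{-1}\|$, the observation is that for any $z \in \C^n$ and any diagonal unitary $U$, $|(Uz)_j| = |z_j|$ componentwise, hence $\|SUz\|_2 = \|Sz\|_2$; applying this with $z = BS^{-1}x$ and taking the supremum over unit vectors $x$ gives the claim. The decoupling identity for the infimum then follows because $\|SBS^{-1}\|$ and $\rho(UB)$ depend on $S$ and $U$ separately.

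Part (2) is the heart of the lemma; my plan is to prove the nontrivial inequality $\inf_S \|SBS^{-1}\| \leq \sup_U \rho(UB)$ by contrapositive. After normalizing so that $\sup_U \rho(UB) < 1$, I aim to produce a positive diagonal $S$ with $\|SBS^{-1}\| \leq 1$. Setting $P = S^2$, this rewrites as $B^* P B \leq P$, equivalently as positive semidefiniteness of the Hermitian form
\[
\Phi_P(x) \,=\, \sum_{j=1}^n p_j \bigl(|x_j|^2 - |b_j \cdot x|^2\bigr)
\]
for some positive diagonal $P$ with entries $p_1,\ldots,p_n$, where $b_j$ is the $j$-th row of $B$. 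Writing $Q_j(x) := |x_j|^2 - |b_j \cdot x|^2$ and assuming no positive combination of the $Q_j$'s is positive semidefinite, SDP duality (a Gordan-type separation in the appropriate cone) produces a nonzero positive semidefinite Hermitian matrix $M$ with $M_{jj} \leq (BMB^*)_{jj}$ for every $j$. A rank-reduction argument --- using that extreme rays of the positive semidefinite cone intersected with only $n$ linear constraints have bounded rank --- replaces $M$ by one of rank at most $2$. This is precisely where the algebraic-geometric input from the introduction enters: restricting to the range of $M$ and invoking the introduction's fact that at most two quadratic forms on $\C^2$ (resp.\ five on $\R^2$) can simultaneously lack both a common nontrivial zero and a positive-definite positive combination yields exactly the thresholds $n \leq 3$ (resp.\ $n \leq 5$), by producing a common nonzero $x$ with $|b_j \cdot x| \geq |x_j|$ for all $j$, from which a diagonal unitary $U$ and scalar $|\lambda| \geq 1$ with $UBx = \lambda x$ can be assembled, contradicting $\sup_U \rho(UB) < 1$.

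For Part (3) my plan is an explicit construction at $n = 4$, built directly from the failure of the $\C^2$ quadratic-forms bound once there are three or more forms: choose rows $b_1, \ldots, b_4$ so that the induced forms $Q_j$ realize this failure on a suitable $2$-plane, forcing $\inf_S \|SBS^{-1}\| > \max_U \rho(UB)$ via the same equivalences used in Part (2). Extension to $n \geq 5$ is then by direct-summing with a sufficiently small extra diagonal block, chosen small enough not to raise $\max_U \rho(UB)$ above the lower bound on $\inf_S \|SBS^{-1}\|$ inherited from the $4 \times 4$ obstruction. The main obstacle throughout is the rank-reduction / $2$-plane restriction step in Part (2): controlling the rank of the witness matrix tightly enough to transfer the problem onto $\C^2$ (or $\R^2$) and invoke the quadratic-forms result, together with the bookkeeping that explains the asymmetry between real and complex cases --- reflecting that for real $B$ one has additional room in the real Grassmannian of $2$-planes to exploit.
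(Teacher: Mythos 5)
Part (1) matches the paper. For Part (2) you propose a genuinely different route from the paper's: a Gordan/SDP-duality argument, instead of the paper's analysis of critical points of $S\mapsto\|SBS^{-1}\|$ (first at a nondegenerate minimum via a first-variation computation producing the exact equalities $|(Br)_j|=\|B\|\,|r_j|$ on the top eigenspace, then extended by a density/dimension count on scaling orbits of the degenerate-singular-value stratum, then to general $B$ via a graph reduction on zero entries). As written, however, your route has concrete gaps. (i) The final inference is wrong as stated: from a nonzero $x$ with $|b_j\cdot x|\geq|x_j|$ for all $j$ you \emph{cannot} directly assemble a diagonal \emph{unitary} $U$ and $|\lambda|\geq1$ with $UBx=\lambda x$; phase alignment gives $U'Bx=x$ only for a diagonal \emph{contraction} $U'$, because the ratios $|(Bx)_j|/|x_j|$ need not share a common value and are undefined where $x_j=0$. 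Passing from contractions to unitaries would require an additional ingredient (e.g.\ subharmonicity of $\rho$ in the entries and the polydisk maximum principle), which you do not invoke; the paper sidesteps this entirely by working at a critical $S_*$, where the first-variation identity forces equalities $|(Br)_j|=\|B\|\,|r_j|$, making the assembly of $U$ immediate. (ii) Your SDP dual certificate only carries the scalar inequalities $\Tr(MQ_j)\leq0$; after compression to the range of $M$ this rules out a \emph{nonnegative} combination with positive trace, but the dichotomy you want to invoke (the paper's Lemma~\ref{2prop} and Corollary~\ref{real2prop}) requires that no \emph{real, signed} linear combination of the compressed forms be definite, which is strictly stronger and not supplied by the dual. (iii) The Pataki--Barvinok-type rank bound with roughly $n$ constraints does not reproduce the thresholds $n\leq3$ (complex) and $n\leq5$ (real): a rank-$\leq2$ certificate would be guaranteed by that count for complex $n$ well past $3$, so if the rest of your chain held it would contradict Part~(3). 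In the paper, the thresholds come from an entirely different count, comparing the codimension of the multiple-top-singular-value locus in $\cM_n(\C)$ or $\cM_n(\R)$ against the dimension $n-1$ of the scaling orbit.

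For Part (3), your plan (build $B=LR^*$ so the induced quadratic forms on a $2$-plane realize the failure of the three-form dichotomy on $\C^2$, then pad by a small diagonal block for $n\geq5$) matches the paper's strategy in outline, but you omit the essential verification that $S=\Id$ is a \emph{global} minimizer of $S\mapsto\|SBS^{-1}\|$ (the local first-order argument only gives criticality); the paper establishes this by the explicit computation $\Tr\bigl((L_S^*L_S)(R_S^*R_S)\bigr)\geq2$ for all $S$. The direct-sum padding step is unproblematic.
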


We shall make important use of Lemma~\ref{linalg_lem} in the treatment of hyperbolic systems in Section \ref{s:general}: specifically, in showing that sharp spectral information for a certain initial-boundary value problem (associated with some matrix $B$), encoded in a spectral radius condition $\rho(UB)<1$ for any $U$, may be realized by a Lyapunov-type energy estimate requiring $\|SBS^{-1}\|<1$ for some $S$.

In order not to delay too much the use of Lemma~\ref{linalg_lem} we postpone to Appendix~\ref{s:ceg3} the treatment of complex counterexamples when $n\geq4$ and the proof of the absence of gap in the real case $n\leq5$.

\begin{proof}
By invariance of spectrum under similarity transformations, we have 
\[
\rho(UB)=\rho(SUBS^{-1})
\]
for any $S$, $U$. Meanwhile, by invariance of norms under isometries and commutation of diagonal matrices, we have
\[
\|SUBS^{-1}\|\,=\,\|USBS^{-1}\|\,=\,\|SBS^{-1}\|
\]
for any $S$, $U$. This is sufficient to prove (1).

\medskip
\emph{Simple case.} We first prove the absence of gap when the infimum of $S\mapsto \|SBS^{-1}\|$ is reached at a $S_*$ such that the largest eigenvalue of $(S_*BS_*^{-1})^*S_*BS_*^{-1}$ is simple. Without loss of generality we may assume that $S_*$ is the identity matrix, since otherwise we may replace $B$ with $S_*BS_*^{-1}$. Recall that for any $S$, $\|SBS^{-1}\|=\sqrt{\rho((SBS^{-1})^*SBS^{-1})}$.

In this case, by standard matrix perturbation theory, near identity the largest eigenvalue of $(SBS^{-1})^*SBS^{-1}=S^{-1}B^*S^2BS^{-1}$ depends smoothly on $S$ and its $s_j$-partial derivative at identity is given by
\[
r^*\left(-E_j B^*B +  B^*(2E_j)B - B^*B E_j\right)r\,=\,
2(|(Br)_j|^2-\|B\|^2|r_j|^2)\,,
\]
where $r$ is a unit eigenvector associated with the maximal eigenvalue of $B^*B$, $E_j$ is the diagonal matrix with only nonzero entry $(E_j)_{jj}=1$ and $(Br)_j$ and $r_j$ denote $j$th entries of $Br$ and $r$. Since by assumption the identity matrix is a critical point of $S\mapsto \|SBS^{-1}\|$ we deduce that for any such $r$
\[
|(Br)_j|\,=\,\|B\|\,|r_j|\quad\textrm{ for any }j
\]
which readily implies the existence of a unitary diagonal matrix $U$ such that $UBr = \|B\|r$, giving $\rho(UB)\geq \|B\|$ and thus $\rho(UB)=\|B\|=\|UB\|$, verifying (2) in this restricted case.

\medskip
\emph{Density argument for full matrices.} We next verify (2) when $n\leq 3$ for complex matrices that are ``full'' in the sense that they have no nonzero entries. To do so, we run a continuity/density argument from the ``simple'' case proved before. To prepare the density part of the argument a few definitions are in order. We shall say that a set has dimension at most $D$ if it is the union of finitely many manifolds of dimension at most $D$. The definition is designed so that providing a parametrization by a manifold of dimension $D$ is sufficient to check that a set is at most of dimension $D$ whereas, when $D<D'$, there holds that in a manifold of dimension $D'$ the complement of a set of dimension at most $D$ is dense. 

To prove continuity in $B$ of the gap near a ``full'' matrix, we simply need to observe that near such a matrix the infimum over $S$ is actually a minimum over a fixed compact subset of $S$. This stems from the fact that if $B_{jk}\neq0$ the $j,k$-entry $(SBS^{-1})_{jk} = B_{jk}(s_j/s_k)$ goes to infinity when $s_j/s_k\to \infty$. This shows that near a ``full'' matrix the infimum is achieved with ratios $s_j/s_k$ varying in a fixed compact of $(0,\infty)$. Since $SBS^{-1}$ only depends on those ratios we may actually fix $s_1\equiv 1$ without loss of generality and thus restrict $S$ to a compact set as announced.

To conclude the analysis of the ``full'' matrix case there remains to show that is dense the set of matrices $B$ such that the infimum of $S\mapsto \|SBS^{-1}\|$ is reached at a $S_*$ such that the largest eigenvalue of $(S_*BS_*^{-1})^*S_*BS_*^{-1}$ is simple. We do so by examining the orbits under $S\mapsto S B S^{-1}$ of matrices $B$ such that $\rho(B)^2$ is a multiple eigenvalue of $B^*B$.  The space $\cM_n(\C)$ has real dimension $2n^2$. On the other hand, as we detail below, by singular-value decomposition $B=LDR^*$, $R$, $L$ unitary and $D$ diagonal, real nonnegative, the set of matrices $B$ for which $B^*B$ has a $m$-tuple largest eigenvalue in $D$ has dimension (at most) $2n^2 + 1-m^2$, hence the set of all scalings $SBS^{-1}$ (taking without loss of generality $S_{11}=1$) of such a $B$ has dimension (at most) $n-1$ higher, or (at most) $2n^2+ n-m^2$. When $n\leq 3$ there holds $2n^2+ n-m^2<2n^2$ for any $m\geq 2$. This implies that the set of ``simple'' matrices treated before is dense when $n\leq3$ and thus proves the absence of gap for ``full'' matrices when $n\leq 3$.

Before moving on to the treatment of the nonfull case let us give more details on the above dimensional count. The sets of unitary matrices $R$ and $L$ have dimensions $n^2$ apiece, accounting for $2n^2$ degrees of freedom.  On the other hand, there is an overcount\footnote{Meaning that one may reduce the dimension of the implicit parametrizing manifold.} of $(n-m)$ for each eigenvalue outside the $m$-repeated block under consideration, as multiplication of the $j$th columns of $R$ and $S$ by the same unitary complex number leaves the singular-value decomposition unchanged. Meanwhile, multiplication on the right by the same unitary $m\times m$ matrix of the $n\times m$ block of columns associated with the repeated eigenvalue also leaves the  singular-value decomposition unchanged, corresponding to an overcount of $m^2$. Subtracting these values from the count, and adding the $(n-m)+1$ parameters corresponding to the entries of $D$, and the $n-1$ parameters corresponding to different scalings $S$ (again, setting the upper left entry to $1$ without loss of generality), we obtain finally (at most) $2n^2 - (n-m)- m^2 + (n-m+1) + (n-1)= 2n^2+ n-m^2$ as claimed.

\medskip
{\em General case.} In the case that some entries of $B$ vanish, the argument is a bit more complicated and we argue by induction\footnote{Up to $n=3$.} on dimension $n$, noting that the lemma holds trivially for dimension $n=1$.

Pick $B\in\cM_n(\C)$. Let us form a directed graph with nodes indexed by integers $1\leq j\leq n$ and a connection from $i$ to $j$ if $B_{ij}\neq 0$. Note that when $i$ is connected to $j$ the infimum over $S$ may be restricted to $S$ with $s_j/s_i$ bounded from above. For any nodes $(i,j)$ in the same closed loop of this graph, all ratios $s_j/s_i$ must vary in a compact of $(0,\infty)$. Thus we identify all nodes lying in a closed loop to reduce to a single node. Note that the argument on ratios still applies to the new graph. Repeating this process, we arrive finally at an irreducible configuration, each connected component of which consists of trees.

Note that if the process ends with a single node this means that for the infimum in $S$ for $B$ and all the nearby matrices all the ratios must lie in a compact of $(0,\infty)$. From this the argument of the full case may be repeated and we conclude again to the absence of gap.

If the process ends with more than one connected component this means that up to reordering indices the matrix $B$ is block-diagonal with at least two blocks. In this case one is thus effectively reduced to lower dimension and concludes from the induction hypothesis.

There remains to deal with the case when the process ends with a single tree that is not reduced to a single node. Pick a node in the final tree. Let $\tilde{B}$ denote the matrix obtained from $B$ by zeroing out any $B_{jk}$ with either $j$ ending in the chosen node and $k$ not ending in this node or the reverse. Note that $\tilde{B}$ is effectively block diagonal and thus verifies the no-gap conclusion. Let $S_\eps$ denote the diagonal matrix with $j$th entry $1$ if $j$ lies in the chosen node, $\eps$ if $j$ lies above the chosen node in the final tree,  $\eps^{-1}$ if $j$ lies below the chosen node in the final tree. Note that $S_\eps B S_\eps^{-1}$ converges to $\tilde{B}$ when $\eps\to0$, the unbounded ratios of $S_\eps$ falling only against zero entries of $B$. This implies that
\begin{align*}
\inf_S\|S\tilde{B}S^{-1}\|&\geq\ \inf_S\|SBS^{-1}\|\,,&
\max_U \rho(U\tilde{B})&=\max_U \rho(U\,B)\,.
\end{align*}
Thus from the absence of gap for $\tilde{B}$ stems the absence of gap for $B$.

\medskip
As announced the rest of the proof is provided in Appendix~\ref{s:ceg3}.
\end{proof}

\br\label{algrmk}
The proof of Lemma \ref{linalg_lem}  affords at the same time a strategy for finding a minimal $S$
when $B$ is full, or more generally when $B$ is irreducible in the sense that the graph reduction of the final step ends into a single node. Near such a matrix, $\inf_S \|SBS^{-1}\|$ is continuous in $B$ whereas for matrices as in the \emph{simple-case} step critical points of $S\mapsto \|SBS^{-1}\|$ are global minimizers. Thus for $n\leq 3$ a gradient search on $S\mapsto\|SBS^{-1}\|$ should for generic $B$ converge toward the unique critical point and global minimizer. By contrast, the energy landscape of $U\mapsto\rho(UB)$ is rather complicated, possessing in general multiple critical points, including saddlepoints and local maxima along with a global maximum; see Appendix~\ref{s:UB}.
\er

\section{The Saint-Venant equations}\label{s:sv}
We illustrate first our damping construction for the simplest case of the $2\times 2$ Saint-Venant model 
treated in \cite{JNRYZ}. For such a low-dimensional system the linear algebraic part is trivial because it effectively involves a $1\times 1$ matrix. 

Explicitly, we consider the $2\times 2$ Saint-Venant model
\be\label{SV}
\d_t(f_0(W)) + \d_x(f(W))=R(W), \quad W=(h,U)^T,
\ee
where 
\begin{align*}
f_0(W)&=(h,hU)^T\,,&
f(W)&=\left(hU,hU^2+\frac{h^2}{2F^2}\right)^T\,,&
R(W)&=(0,h-|U|\,U)^T\,,
\end{align*}
with $F$ a Froude number. We write the system in abstract form so as to prepare the transition to the general case dealt with in the next section. Yet we borrow from \cite{JNRYZ} --- that relies on the particular form of the system --- the verification of the needed structural assumptions made explicit in the next section.

The only restrictions we make as assumptions are that the notion of solution we use is entropic solution and that we consider only periodic waves with a single shock by period.

\subsection{Spectral analysis}\label{s:wkb}

Let us first recall the derivation of high-frequency spectral asymptotics carried out in \cite{JNRYZ}.

Starting with \eqref{SV}, linearizing in co-moving coordinates $\tilde x=x-ct$ about a periodic traveling wave profile $\bar W$ moving at speed $c$ with period $X$, and changing unknown to a ``good unknown'' (see Remark~\ref{goodrmk} below), we obtain eigenvalue equations 
\be \label{evalsv}
(Aw)'= (-\lambda A_0 +E)w, \qquad 
\ee
where 
\begin{align*}
A&=\dD f(\bar W)-c\,A_0,&A_0&=\dD f_0(\bar W),&E&=\dD R(\bar W),
\end{align*}
augmented with jump conditions 
\be\label{goodjump}
y_j\,(\lambda \,[f_0(\bar W)] - [R(\bar W)])=-[Aw]_{jX},
\ee
at the shocks, where $y_j$ is (the Laplace transform of) an unknown shift in shock location to be determined in the course of solving \eqref{goodjump}. We have translated to force shocks to be at $jX$, $j\in\Z$, and denoted $[u]_d=u(d^+)-u(d^-)$, dropping $d$ subscript when $d\in X\Z$ and $u$ has period $X$.

Here, $\dD f(\bar W)$ and $\dD f_0(\bar W)$ are exactly as for isentropic polytropic gas dynamics with $\gamma=2$.
In particular, the eigenvalues of $A_0^{-1}A$ consist of acoustic modes $\alpha_1$, $\alpha_2$ that are distinct for all choices of $W$, satisfying  $\alpha_1<U-c<\alpha_2$. See \cite[Section~4]{JNRYZ} for further details.

Performing a ``frozen-coefficients'' diagonalization procedure as in \cite[Section~6]{JNRYZ}, 
we obtain
\be\label{diagsvY}
\bp \alpha_1&0\\0&\alpha_2\ep u'=-\bp \lambda +\gamma_1 & \beta_1 \\ \beta_2 & \lambda+ \gamma_2   \ep u, 
\ee
with $\beta_j$, $\gamma_j$ real. Namely, here, $w=Tu$ where $T$ is such that $T^{-1}A_0^{-1}AT$ is diagonal with diagonal entries $\alpha_1$, $\alpha_2$. We refer to \cite[Section~6]{JNRYZ} for explicit expressions\footnote{Our present notation differ though.}. As shown in \cite[Section~2]{JNRYZ} discontinuities of roll-waves of \eqref{SV} are necessarily Lax $2$-shocks, 
\be\label{Laxalpha}
\alpha_1(X^-)<0<\alpha_2(X^-)\qquad\textrm{ and }\qquad \alpha_1(0^+)<\alpha_2(0^+)<0\,.
\ee

Next, we perform a further diagonalization $u=\tilde{T}z$, with $\tilde{T}=\Id + \cO(\lambda^{-1})$ eliminating to $\cO(\lambda^{-1})$ the off-diagonal entries $\beta_j$, obtaining
\be\label{diagsv}
\bp \alpha_1&0\\0&\alpha_2\ep z'=-\bp \lambda+\gamma_1 & 0 \\ 0& \lambda+ \gamma_2   \ep z +\cO(\lambda^{-1})z\,.
\ee
As observed in \cite[Appendix~A]{JNRYZ}, the ``sonic mode'' associated with the characteristic $\alpha_2$ that changes sign at a $x=x_s$, blows up as $x\to x_s$ from both sides. This mode is therefore not in $H^1_{loc}$, and is not included in the periodic Evans-Lopatinsky determinant for the problem, which tests whether solutions of the interior eigenvalue ODE above satisfy the boundary conditions corresponding to the linearized Rankine-Hugoniot condition at the inviscid shock in $w$ coordinates, up to a Floquet shift $\eD^{\iD\xi X}$, $\xi\in \R$: namely, the function
\be\label{EL}
D(\lambda,\xi):=\det \bp (Aw_\lambda)(X^-)- \eD^{\iD\xi X} (Aw_\lambda)(0^+) & \lambda[f_0(\bar W)] - [R(\bar W)]\ep,
\ee
where $w_\lambda$ is the unique (up to normalization) analytic solution of the interior eigenvalue ODE. 

The Evans-Lopatinsky determinant is related to a suitably defined spectrum with $(w,(y_j))$ measured in $H^1(\tRR)\times \ell^2(\Z)$ where
\[
\tRR:=\bigcup_{j\in\Z}(jX,(j+1)X)\,,
\]
through the use of a Floquet-Bloch transform that reduces the whole line problem to periodic problems parametrized by a Floquet exponent $\xi$. For a detailed description/derivation, see again \cite[Section~4]{JNRYZ}.

For large $|\lambda|$, then, the Evans-Lopatinsky determinant is given asymptotically by\footnote{We use here $\sim$ in a rather informal way.}
\be\label{svas}
D(\lambda,\xi)\sim \det 
\bp \eD^{-\int_0^X\frac{\lambda+\gamma_1}{\alpha_1}}(AT_1)(X^-) -\eD^{\iD\xi X}(AT_1)(0^+) & \lambda\,[f_0(\bar W)] \ep
\,,
\ee
where $T_1(x)$ is the first column of $T(x)$, i.e., the specific eigenvector of $(A_0^{-1}A)(x)$ associated with eigenvalue $\alpha_1$ that is used in the coordinatization of $w$ by $u_1$, $u_2$. In particular when $\Re(\lambda)$ itself is large
\[
D(\lambda,\xi)\sim \lambda\,\eD^{-\int_0^X\frac{\lambda}{\alpha_1}}\det 
\bp (AT_1)(X^-)& [f_0(\bar W)] \ep\,,
\]
and one recovers that local well-posedness near Lax shock requires the Lopatinsky condition 
\[
\det\bp (AT_1)(X)& [f_0(\bar W)] \ep\,\neq\,0\,.
\]

What we seek to identify is the condition ensuring that for $\Re \lambda\geq -\eta$, $\eta>0$ fixed sufficiently small and $|\lambda|$ large, $D(\lambda,\xi)$ does not vanish for any $\xi\in \R$. Directly evaluating the principal part of \eqref{svas}, we obtain 
\be\label{indeq}
D(\lambda,\xi)\sim
 \lambda\,\eD^{-\int_0^X\frac{\lambda+\gamma_1}{\alpha_1}}\det 
\bp (AT_1)(X^-)& [f_0(\bar W)] \ep
 \,\Big(1-\eD^{\iD\xi X} \eD^{\int_0^X\frac{\lambda+\gamma_1}{\alpha_1}}C\Big)\,,
\ee
where
\be\label{C}
C= \frac{\det \bp (AT_1)(0^+) & [f_0(\bar W)] \ep}{\det \bp (AT_1)(X^-) & [f_0(\bar W)]\ep}.
\ee
The occurrence of a high-frequency spectral gap is thus equivalent to
\be\label{ind}
I:=
e^{\int_0^X\frac{\gamma_1}{\alpha_1}}\,C\,<\,1.
\ee
The quantity $I$ is referred to in \cite{JNRYZ} as the ``high-frequency stability index''. For later reference, we give an alternative characterization of $C$. It follows from the Lopatinsky condition that there exists a unique $(a,b)$ such that
\[
a\,(AT_1)(X^-)+b\,[f_0(\bar W)]\,=\,(AT_1)(0^+)
\]
and from the Cramer formula that $a=C$.

In \cite{JNRYZ} the high-frequency spectral gap condition \eqref{ind} was checked numerically, but not proved, to hold for all roll-waves of \eqref{SV}.

\subsection{Linear damping estimate}\label{s:linsv}

Even for smooth periodic waves the best notion of stability one may expect, coined as space-modulated stability in \cite{JNRZ13}, is encoded by the control of $(v,\d_x\psi,\d_t\psi)$ such that
\be\label{og}
v(x,t)= W(x+ct-\psi(x,t),t) -\bar W(x)\,,
\ee
with $v$, $\psi$ designed to capture respectively shape and position deformations. We refer to \cite{R_HDR,R_Roscoff} for further discussion on the latter. For discontinuous waves, when measuring proximity in piecewise smooth topologies one must already introduce a similar treatment in local-well-posedness results; see for instance \cite{Majda_stability_multiD_shock-fronts,N3}. 

Inserting \eqref{og} in \eqref{SV} in co-moving coordinates and considering $(v,\psi)$ as unknown small functions 
gives the linearized equations 
\be \label{linsv0}
A_0 \d_tv + \d_x(Av)- Ev=-\d_t\psi A_0\,\bar W'-\d_x\psi A\,\bar W'+F_{quad},\qquad\textrm{on }\tRR 
\ee
with $A_0$, $A$, $E$ as in \eqref{evalsv}, together with linearized jump conditions 
\be\label{abjump0}
\d_t\psi\,[f_0(\bar W)]+[Av]= G_{quad}\,,\qquad\textrm{on }X\Z\,,
\ee
with $F_{quad}$ and $G_{quad}$ replacing terms that are at least quadratic.

Besides the dropping of forcing terms and the Laplace transform in time, there is another difference between variables $(v,\psi)$ of the present subsection and $(w,y)$ of the preceding subsection. Let us set
\begin{align*}
w(x,t)&=v(x,t)+\psi(x,t) \bar W'(x),&
y(jX,t)&=\psi(jX,t),
\end{align*} 
and note that \eqref{linsv0}-\eqref{abjump0} becomes 
\begin{align*}
A_0 \d_tw + \d_x(Aw)- Ew&=F_{quad},\qquad\textrm{on }\tRR\,,\\
\frac{\dD\ y}{\dD t}\,[f_0(\bar W)]
-y\,[R(\bar W)]+[Aw]&=G_{quad}\,,\qquad\textrm{on }X\Z\,.
\end{align*}

\br\label{goodrmk}
The ``good unknown'' $w$ is used crucially in proofs of nonlinear and linear asymptotic stability of spectrally stable periodic waves to isolate the phase $\psi$ in a manner yielding optimal linear bounds on the residual $v$. See for instance \cite{JNRZ-RD1,JNRZ13,R_linearKdV}. In related nonlinear schemes, time decay and improvement in nonlinearity is obtained in $(v,\psi)$ whereas the $w$ variable is used to close in regularity without losing or gaining in nonlinearity or decay, through high-frequency damping estimates. The point is that some of the terms in $F_{quad}$ and $G_{quad}$ contain too many derivatives to be considered as forcing terms in the energy estimate but may instead be thought as introducing small variations in coefficients of the linear part and treated perturbatively as such. For the sake of simplicity we shall simply obtain bounds omitting the fact that $(F_{quad},G_{quad})$ are replacing terms with too many derivatives. We refer to \cite{YZ,FR1,FR2} for complete treatments for discontinuous fronts with a single discontinuity.
\er

In the following we estimate the regularity of $w$ when $(w,y)$ solves
\be \label{linsv}
A_0 \d_tw + \d_x(Aw)- Ew=F,\qquad\textrm{on }\tRR 
\ee
together with  
\be\label{abjump}
\frac{\dD\ y}{\dD t}\,[f_0(\bar W)]
-y\,[R(\bar W)]+[Aw]=G\,,\qquad\textrm{on }X\Z\,,
\ee
for some given $(F,G)$.

\smallskip

Our goal is to establish the following linear damping estimate, converting high-frequency spectral
information into a Lyapunov-type energy estimate.

\bpr\label{ldampingsv}
Assume high-frequency spectral stability \eqref{ind}. For some $\theta>0$ and $C$, if $w$ solves \eqref{linsv}-\eqref{abjump} (for some shift sequence $y$) on a time interval $[0,T_0]$ with $w(0,\cdot)=w_0$ then for all $0\leq t\leq T_0$
\begin{align}
\label{ldampest}
\|w(\cdot,t)\|_{{H^1(\tRR)}}^2
&\leq C\,\eD^{-\theta t} \|w_0\|_{H^1(\tRR)}^2\\\nonumber
&\quad+C\int_0^t \eD^{-\theta(t-\tau)}\big(\|F(\cdot,\tau)\|_{H^1(\tRR)}^2
 +\|(G,\d_tG)(\cdot,t)\|_{\ell^2(X\Z)}^2\big)\, \dD\tau
\\\nonumber
&\quad+C\int_0^t \eD^{-\theta(t-\tau)}\big(\|w(\cdot,\tau)\|_{L^2(\tRR)}^2
 + \|y(\cdot,\tau)\|_{\ell^2(X\Z)}^2
 \big)\, \dD\tau.
\end{align}
\epr

\br\label{regrmk}
In related nonlinear analyses it is actually important to note that the linear slaving of the $H^1$-norm to the $L^2$-norm may be improved into linear slaving of $H^s$ to $L^2$ for any $s$ sufficiently large since its nonlinear counterpart must be carried out with $s$ such that $H^s$ is embedded in $W^{1,\infty}$. Again we refer to \cite{YZ,FR1,FR2} for the necessary adaptations. A less usual phenomenon appears here: because of the presence of a characteristic point there is a lower bound on the indices $s$ for which a linear slaving of $H^s$ to $L^2$ is possible. The same regularity threshold appears for the spectral problem. For \eqref{SV} this $H^s$-threshold is $s>1/2$.
\er

\begin{proof}
In spirit we follow the ``gauge'' approach of \cite{RZ}; however the concrete way of finding an appropriate gauge is rather different here. In \cite{RZ}, we looked for smooth periodic weights and obtained those by solving scalar linear differential equations with periodic coefficients of zero integral so as to enforce spatial periodicity. Here, periodicity is imposed by force, since the problem is discontinuous, consisting of cells connected by boundary conditions only. Integral constraints on the gauge arise in this case rather through interaction between boundary conditions on either end of the cell. In terms of technical specifics, we use in noncharacteristic modes the ``Goodman'' weights of \cite{MZ_large,YZ,FR2}, and in the characteristic, or ``sonic'' mode, we use a key technique originating from \cite{DR2,GR} to treat the sonic mode by $H^1$ energy estimate. These two types of diagonal weights are combined with a ``Kawashima'' type estimate as in \cite{MZ_large,YZ,FR1,FR2} removing cross-terms corresponding to matrix entries $\beta_j$; this gives a different way of removing ``lower-order'' terms $\beta_j$, translating into energy estimates the symbolic calculations of the high-frequency spectral analysis above.

Making the same ``frozen-coefficients'' coordinatization as in \eqref{diagsvY} converts \eqref{linsv} to
\be\label{lindiag}
\d_tu + \bp \alpha_1 & 0 \\ 0 & \alpha_2 \ep \d_x u =-\bp \gamma_1 & \beta_1 \\ \beta_2 & 
\gamma_2\ep u
+T^{-1}A_0^{-1}\,F\,,
\ee
$u=(u_1,u_2)^T$, with the single boundary condition
\be\label{bclin}
u_1(jX^-,t)=a_0 u_1(jX^+,t)+ b_0 u_2(jX^-,t)+c_0 u_2(jX^+,t)+d_0 G(jX,t)+e_0 y(jX,t)
\ee
obtained from \eqref{abjump} by invoking Lopatinsky condition. We note, crucially, that $a_0$ is identical to $C$ defined in \eqref{C}. By using $\d_tu_j=\alpha_j \d_xu_j+\lot$ (where here and henceforth $\lot$ denotes lower order terms), together with the time-derivative of \eqref{bclin}, we obtain the differentiated boundary conditions
\be\label{dbclin}
\alpha_1\d_x u_1(jX^-,t)= a_0\alpha_1\d_x u_1(jX^+,t)
+b_0\alpha_2\d_x u_2(jX^-,t)+ c_0 \alpha_2\d_x u_2(jX^+,t)+\lot\,.
\ee
This is to be combined with 
\[
\d_t(\d_xu)+ \bp \alpha_1 & 0 \\ 0 & \alpha_2 \ep \d_x(\d_x u)=-\bp \gamma_1+\alpha_1' & \beta_1 \\ \beta_2& 
\gamma_2+\alpha_2'\ep \d_xu+\lot\,.
\]
Before proceeding we warn the reader that though both in interior and boundary equations $\lot$ simply covers terms with fewer spatial derivatives of the unknown $u$ or data terms, performing the underlying reduction in the boundary equations requires a few more manipulations including the replacement of derivatives of $y$ by resorting to \eqref{abjump}. 

For $k=1,2$, we introduce periodic scalar weights $\Omega_k=\eD^{\omega_k}$ on $\tRR$ with $\omega_k$ to be determined. Then, omitting to mark the $t$-dependency, we compute
\begin{align*}
\frac12\frac{\dD}{\dD t}&\langle \Omega_k \d_xu_k,\d_xu_k \rangle_{L^2}
+\langle \Omega_k \delta_k\d_xu_k,\d_xu_k \rangle_{L^2}\\
&+\frac12\sum_{j\in\Z}\left((\alpha_k \Omega_k (\d_x u_k)^2) (jX^-) 
-(\alpha_k \Omega_k (\d_x u_k)^2) ((j-1)X^+)
\right)\\
&=-\langle \Omega_k \d_x u_k, \beta_k \d_x  u_{k'} \rangle_{L^2}+\lot
\end{align*}
with $k'$ denoting the index complementary to $k$ and
\be\label{deltaj}
\delta_k:= \frac12 \alpha_k'+\gamma_k-\frac12\alpha_k\omega_k'\,.
\ee

In order to have damping, we need in the final computation that both interior and boundary terms exhibit good signs. Negativity of diagonal interior terms corresponds to positivity of dissipation coefficients. We arrange this in different ways for the sonic mode $k=2$ and the transverse mode $k=1$. 

\smallskip

{\em Sonic mode.} For the sonic mode, we observe that at the sonic point $x_s$ for which $\alpha_2(x_s)=0$, we have $\alpha_2'(x_s)>0$, so that $\delta_2(x_s)>0$, a consequence of the Lax conditions $\alpha_2(0)<0<\alpha_2(X)$ and the fact that there is only a single sonic point/change in sign of $\alpha_2$ on $[0,X]$. Again we refer to \cite[Section~2]{JNRYZ} for a proof that those hold for roll-waves of \eqref{SV}. Indeed, positivity of $\alpha_2'(x_s)$ is a necessary condition for our scheme to work. We also observe that 
\[
\frac12\alpha_2'(x_s)+\gamma_2(x_s)>0\,.
\]
This positivity is the reason why we could set the spectral problem at the $H^1$ level compatible with stability; the presence of a characteristic point making the nature of the spectrum extremely sensitive to regularity. The positivity of $\alpha_2'(x_s)$ is however sufficient to guarantee that this occurs in $H^s$ for $s$ sufficiently large, the stopping criterion being
\[
\left(s-\frac12\right)\,\alpha_2'(x_s)+\gamma_2(x_s)>0\,.
\]
Again we refer to \cite[Appendix~A]{JNRYZ} and \cite{DR2} for details.

Accordingly, drawing inspiration from \cite{DR2,GR}, we choose $\omega_2$ so as to enforce
\[
\delta_2\equiv \frac12\alpha_2'(x_s)+\gamma_2(x_s)>0,
\]
by the well-defined quadrature formula
\[
\omega_2' = \frac{\alpha_2'-\alpha_2'(x_s)}{\alpha_2}+2\frac{\gamma_2-\gamma_2(x_s)}{\alpha_2}
\]
or, for $x\in ((j-1)X,jX)$, 
\be\label{2ode}
\Omega_{2,(C_0)}(x)\,:=\,C_0 \exp\left(\int_{(j-1)X}^x \left( 
 \frac{\alpha_2'(y)-\alpha_2'(x_s)}{\alpha_2(y)} + 2\frac{\gamma_2(y)-\gamma_2(x_s)}{\alpha_2(y)}\right)\dD y\right)\,.
\ee
Boundary terms for the sonic mode $k=2$ are individually of good sign. Choosing the constant $C_0>0$ in \eqref{2ode} sufficiently large, we may make these favorable boundary terms as large as we please, hence any other boundary terms involving the $k=2$ mode may be absorbed, hence can be neglected in our further computations and shall be considered as $\lot$ below.

\smallskip

{\em Transverse mode.} For the transverse mode $k=1$, we have, on the other hand $\alpha_1<0$ for all $x\in [0,X]$. Setting
\[
\delta_{1,(\eps)}\equiv \eps>0,
\]
therefore, for arbitrary $\eps>0$, we obtain a well-defined quadrature
\be\label{1ode}
\omega_{1,(\eps)}'=\frac{\alpha_1'+2(\gamma_1-\eps)}{\alpha_1}, 
\ee
in which we can take $\eps>0$ as close to zero as needed in the analysis. Here, it is critical to have the precise form of $\omega_{1,(\eps)}$, as it links ``good'' boundary terms at one (outgoing) end to ``bad'' (incoming) terms at the other. For later use, we record this as, for $x\in ((j-1)X,jX)$, 
 \be\label{limeps}
 \Omega_{1,(\eps)}(x)\,:=\,|\alpha_1(x)|\,\eD^{\int_{(j-1)X}^x 2\frac{\gamma_1(y)-\eps}{\alpha_1(y)}\,\dD y}\,.
 \ee
 
With this choice of $\Omega_1$, recalling from \eqref{dbclin} that $\alpha_1\d_x u_1(jX^-)= a_0\alpha_1\d_x u_1(jX^+)+\lot$, one deduces that the boundary terms for the transverse $k=1$ are
\[
\frac12\sum_{j\in\Z}\eta_{1,(\eps)}\,(\Omega_{1,(\eps)}|\alpha_1|(\d_x u_1)^2)(jX^+) 
+\lot
\]
with 
\[
\eta_{1,(\eps)}:=1-a_0^2\frac{(\alpha_1^{-1}\Omega_{1,(\eps)})(X^-)}{(\alpha_1^{-1}\Omega_{1,(\eps)})(0^+)}
\,=\,1-a_0^2
\eD^{\int_0^X 2\frac{\gamma_1(y)-\eps}{\alpha_1(y)}\,\dD y}
\,,
\]
which is dissipative so long as $\eta_{1,(\eps)}>0$. At the limit $\eps\to 0$, $\eta_{1,(0)}>0$ is equivalent to the high-frequency  spectral stability condition \eqref{ind} found in Subsection~\ref{s:wkb}. Positivity then persists for sufficiently small $\eps>0$. {\it This is the key observation linking energy estimates to WKB-type expansion.} 

\smallskip

{\em Cross-terms.} At this point we have accounted for all except the cross-term contribution. The algebraic computation is essentially identical to the $\tilde{T}$-elimination step of Subsection~\ref{s:wkb}. As there the point is that commutators of a general matrix with $\diag(\alpha_1,\alpha_2)$ may generate any off-diagonal matrix. This is used to pick a skew-symmetric compensator $K$ to insert in 
\begin{align*}
\frac{\dD}{\dD t}\langle \d_xu,K u\rangle_{L^2}
&=\langle \d_xu,K \d_tu\rangle_{L^2}-\langle \d_tu,K \d_xu\rangle_{L^2}+\lot\\
&=\langle \d_xu,[K,\diag(\alpha_1,\alpha_2)]\,\d_xu\rangle_{L^2}+\lot
\end{align*}
so that any cross-term may be discarded.

\smallskip

The proof is then concluded by estimating the time evolution of
\be\label{E}
\cE(u):=\frac12\langle D\d_xu, \d_xu\rangle+\langle \d_xu,K u\rangle_{L^2}+\frac12 C_0'\|u\|_{L^2}^2
\ee
with $D=\diag(\Omega_1,\Omega_2)$ and choices made in the following order: 
\begin{enumerate}
\item one first picks $\Omega_1=\Omega_{1,(\eps)}$ with $\eps>0$ sufficiently small to enforce $\eta_{1,(\eps)}>0$ by benefiting from \eqref{ind};
\item then one sets $\Omega_2=\Omega_{2,(C_0)}$ with $C_0$ sufficiently large to guarantee positivity except for cross terms;
\item then one designs $K$ to kill remaining cross-terms;
\item finally one takes $C_0'$ sufficiently large to enforce that $\cE(u)$ is equivalent to $\|v\|_{H^1(\tRR)}^2$.
\end{enumerate}
\end{proof}

\subsection{Towards nonlinear stability}\label{s:nonlin}

Unlike \cite{RZ} that came to complete and improve the high-frequency part of a full nonlinear stability scheme of proof, we warn the reader that this is not the case for the present contribution. Even for the Saint-Venant system, the question is largely open.

A key difficulty already pointed out in \cite{JNRYZ} is that near periodic traveling waves there is an infinite-dimensional family of traveling waves. Despite the resolution of a similar but distinct degeneracy in \cite{GR}, the resolution of the latter by a suitable phase isolation remains unclear.


We leave this issue for future investigation.

\section{General systems}\label{s:general}
Recent refinements of the Saint-Venant equations proposed by Richards-Gavrilyuks \cite{RG1} are of larger system size, involving additional variables modeling bottom- and shock-layer vorticities. As described further in \cite{RG1,RYZ1,RYZ2,RYZ3}, these include both the full $4\times 4$ system (RG4) and and an invariant $3\times 3$ subsystem (RG3) with bottom vorticity held constant, containing in particular the class of roll wave solutions. This motivates the extension of the analysis of Section \ref{s:sv} to discontinuous periodic traveling waves of general $n\times n$ systems
\be\label{gensys}
\d_t(f_0(W)) + \d_x(f(W))=R(W).
\ee
Again, we restrict to periodic waves (of profile $\bar{W}$, period $X$ and speed $c$) with a single discontinuity by periodic cell.

We make the assumptions, satisfied for both (SV) and (RG3), that
\begin{enumerate}[label=(\roman*)]
\item System~\eqref{gensys} is {\it strictly hyperbolic} in the sense that eigenvalues of $(\dD f_0^{-1}\dD f)(W)$
for $W$ near the range of $\bar W$ are real and simple;
\item The profile $\bar W$ of period $X$ is smooth on $\tRR=\cup_{j\in\Z}((j-1)X,jX)$, with {\it admissible Lax-type} shock discontinuities at points $jX$, in the sense that $(n+1)$ characteristics enter and $(n-1)$ leave the shocks from either side and Majda's Lopatinsky condition;
\item On $(0,X)$ a single eigenvalue of $((\dD f_0)^{-1}\dD f)(\bar W)-c\,\Id$ changes sign and it does so at a single point $x_s$ in a simple way.
\end{enumerate}
Some form of $(ii)$ is required to guarantee local well-posedness whereas some form of $(iii)$ is required to discard high-frequency instabilities due to characteristic points. The former, in the present 1-D case, is precisely the condition that the Rankine-Hugoniot conditions, after eliminating the shock-location unknown, yield a rank $(n-1)$ boundary condition expressing outgoing modes in terms of incoming modes. Assumption $(i)$ is made for simplicity and it is probably the main one to relax from an applicative point of view.

\br\label{sonicrmk}
A wealth of intuition on characteristic points may be gained from the toy local spectral problem
\[
d_{(s)}\,(\cdot-x_s)\,u'=-(\lambda+\gamma_{(s)})u+F\qquad \textrm{on a neighborhood of }x_s
\]
whose $k$th derivative yields 
\[
d_{(s)}\,(\cdot-x_s)\,(u^{(k)})'=-(\lambda+\gamma_{(s)}+k\,d_{(s)})u^{(k)}+F^{(k)}\,.
\]

As in \cite[Section~2.2]{DR2} one may check that for any $\ell$, $\delta_{x_s}^{(\ell)}$ solves the formally dual eigenvalue equation when $\lambda=-(\gamma_{(s)}+\ell\,d_{(s)})$ so that $-(\gamma_{(s)}+\ell\,d_{(s)})$ belongs to the $H^k$-spectrum whenever $k>\ell+\frac12$. This implies that when $d_{(s)}<0$, for any $\eta\in\R$ there exists $k_0$ such that for any $k\geq k_0$, the $H^k$-spectrum meets the half-plane $\Re(\lambda)\geq\eta$. At the nonlinear level in the case $d_{(s)}<0$, one may prove as in \cite[Section~2.2]{DR2} the formation of new shocks in finite-time even for small smooth perturbations.

Focusing now on the case $d_{(s)}>0$, when modes are outgoing and no boundary 
condition is required, one may check as in \cite[Appendix~A]{JNRYZ} by direct computation that unique solvability in $H^k$ is equivalent to 
\[
\frac{\Re(\lambda)+\gamma_{(s)}}{d_{(s)}}>\frac12-k
\]
and $\lambda\neq-(\gamma_{(s)}+\ell\,d_{(s)})$ for any integer $\ell$ such that $\ell<k-1/2$.

Those insights are translated as in \cite{JNRYZ,DR2} to the genuine coupled and varying-coefficient case with essentially no change in the high-frequency regime.
\er

\subsection{Spectral analysis}

The eigenvalue system has the same structure as for the Saint-Venant system
\[
(Aw)'= (-\lambda A_0 +E)w, \qquad A=\dD f(\bar W)-c\,A_0,\quad A_0=\dD f_0(\bar W),\quad E=\dD R(\bar W)
\]
augmented with jump conditions 
\[
y_j\,(\lambda \,[f_0(\bar W)] - [R(\bar W)])=-[Aw]_{jX},
\]
with $(y_j)_j$ to be determined jointly with $w$.

We may perform a first diagonalization $w=T^{-1}u$, to derive 
\[
\alpha\,u'\,=\,-(\lambda\Id+\gamma+\beta)\,u
\]
with $\alpha$ and $\gamma$ diagonal and $\beta$ off-diagonal. For some $0\leq m\leq n-1$, one may order coordinates to enforce 
\begin{align*}
\alpha&=\diag(\alpha_1,\cdots,\alpha_n)\,,&
\end{align*}
with
\begin{align*}
\alpha_1&>\cdots>\alpha_m>0\,,&
\alpha_{m+1}&<\cdots<\alpha_{n-1}<0\,,&\\
\alpha_{n-1}&<\alpha_n<\alpha_m\,,&
\alpha_n(0)&<0<\alpha_n(X)\,.
\end{align*}
Accordingly we set 
\begin{align*}
\alpha_+&=\diag(\alpha_1,\cdots,\alpha_m)\,,&
\alpha_-&=\diag(\alpha_{m+1},\cdots,\alpha_{n-1})\,,&\alpha_{(s)}&=\alpha_n\,,\\
\gamma_+&=\diag(\gamma_1,\cdots,\gamma_m)\,,&
\gamma_-&=\diag(\gamma_{m+1},\cdots,\gamma_{n-1})\,,&\gamma_{(s)}&=\gamma_n\,,\\
u_+&=(u_1,\cdots,u_m)^T\,,&
u_-&=(u_{m+1},\cdots,u_{n-1})^T\,,&u_{(s)}&=\gamma_n\,.
\end{align*}

Picking $k$ such that 
\be\label{kreg}
k>\frac12-\frac{\gamma_{(s)}(x_s)}{\alpha_{(s)}'(x_s)}\,,
\ee
one then derives as in \cite{JNRYZ} a consistent $H^k$-spectral theory with spectrum in an open half plane containing $[0,+\infty)+\iD\R$ coinciding with $\lambda$ such that
\begin{align*}
&D(\lambda,\xi):=\\
&\det \bp (Aw_{1,\lambda})(X)-\eD^{\iD\xi X} (Aw_{1,\lambda})(0)
&\hspace{-0.5em}\cdots\hspace{-0.5em}& 
(Aw_{n-1,\lambda})(X)-\eD^{\iD\xi X} (Aw_{n-1,\lambda})(0)
& \lambda[f_0(\bar W)] - [R(\bar W)]\ep
\end{align*}
vanishes for some Floquet exponent $\xi\in\R$, where $(w_{1,\lambda},\cdots,w_{n-1,\lambda})$ forms a basis of the set of analytic solutions to the interior eigenvalue ODE. 

By performing a further approximate diagonalization step $v=(\Id+\cO(\lambda^{-1}))u$, one may replace $\beta$ with an $\cO(\lambda^{-1})$-term and check that a suitable choice of $(w_{1,\lambda},\cdots,w_{n-1,\lambda})$ enforces that for $\lambda$ large, $D(\lambda,\xi)$ is given at leading order by
\[
\det 
\bp \eD^{-\int_0^X\frac{\lambda+\gamma_1}{\alpha_1}}(AT_1)(X) -\eD^{\iD\xi X}(AT_1)(0)& 
\hspace{-0.5em}\cdots\hspace{-0.5em}&
\eD^{-\int_0^X\frac{\lambda+\gamma_{n-1}}{\alpha_{n-1}}}(AT_{n-1})(X) -\eD^{\iD\xi X}(AT_{n-1})(0) &
\lambda\,[f_0(\bar W)] \ep
\]
where $T_j(x)$ is the $j$th column of $T(x)$. In particular when $\Re(\lambda)$ is large, $D(\lambda,\xi)$ is given at leading order by a nonvanishing factor times
\[
\Delta_0:=
\det 
\bp (AT_1)(0)&\hspace{-0.5em}\cdots\hspace{-0.5em}&(AT_m)(0)&(AT_{m+1})(X)&\hspace{-0.5em}\cdots\hspace{-0.5em}&(AT_{n-1})(X)&[f_0(\bar W)] \ep\,.
\]

The non vanishing of the foregoing determinant is precisely the Lopatinsky condition. Therefore there exist a $C\in\cM_{n-1}(\R)$ and $\ell\in \cM_{1,n-1}(\R)$ such that
\begin{align*}
\bp (AT_1)(0)&\hspace{-0.5em}\cdots\hspace{-0.5em}&(AT_m)(0)&(AT_{m+1})(X)&\hspace{-0.5em}\cdots\hspace{-0.5em}&(AT_{n-1})(X)&[f_0(\bar W)]\ep
\bp C\\\ell\ep\\
=\bp (AT_1)(X)&\hspace{-0.5em}\cdots\hspace{-0.5em}&(AT_m)(X)&(AT_{m+1})(0)&\hspace{-0.5em}\cdots\hspace{-0.5em}&(AT_{n-1})(0)\ep\,.
\end{align*}
Inserting this in the above yields after elementary matrix reduction to the fact that for $\lambda$ large, $D(\lambda,\xi)$ is given at leading order by
\begin{align*}
\lambda\,\Delta_0\,\det(-\eD^{-\int_0^X(\lambda+\gamma_-)\alpha_-^{-1}})\,\eD^{m\iD\xi X}
\quad\det(B_{\lambda,\xi}-\Id)
\end{align*}
where 
\begin{align*}
B_{\lambda,\xi}\,:=\,\diag(\eD^{-\iD\xi X}\eD^{-\lambda\int_0^X\alpha_+^{-1}},
\eD^{\iD\xi X}\eD^{\lambda\int_0^X\alpha_-^{-1}})\,B
\end{align*}
and
\begin{align*}
B:=\diag(\eD^{-\int_0^X\gamma_+\alpha_+^{-1}},
\eD^{\,\int_0^X\gamma_-\alpha_-^{-1}})\,C\,.
\end{align*}

Since $B_{\lambda,\xi}$ only provides a leading-order description, we need some margin in the description of a high-frequency gap in terms of $B_{\lambda,\xi}$. Whereas $B_{\lambda,\xi}$ converges to zero in the large $\Re(\lambda)$ limit providing some form of compactness in values, the large $\Im(\lambda)$ limit is more complicated. The following lemma elucidates this point in ``generic'' situations.

\begin{lemma}\label{Ulem}
Under the ``generic'' assumption that
\be\label{ratcond}
\left(1,\int_0^X \alpha_1 ^{-1},\cdots,\int_0^X \alpha_{n-1} ^{-1}\right)\textrm{ are rationally independent}
\ee
then the following conditions are equivalent	
\begin{enumerate}
\item There exist $\eta>0$, $M>0$ and $c>0$ such that for any $\lambda$ such that $|\lambda|\geq M$, $\Re(\lambda)\geq -\eta$ and for any $\xi\in\R$, $|\det(B_{\lambda,\xi}-\Id)|\geq c$.\\
\item For any $a\in[0,+\infty)$,
\[
\max_{U\textrm{ diagonal, unitary}}\rho(U\,B_{a,0})<1\,.
\]
\end{enumerate}
\end{lemma}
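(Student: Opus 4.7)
The plan is to factor, with $\lambda=a+\iD b$,
\[
B_{\lambda,\xi}\,=\,D_a\,U_{\xi,b}\,B\,,
\]
where $D_a:=\diag(\eD^{-a\int_0^X\alpha_+^{-1}},\eD^{a\int_0^X\alpha_-^{-1}})$ is diagonal and real positive (of norm $\leq 1$ for $a\geq 0$, tending to $0$ as $a\to+\infty$ since $\alpha_+>0$ and $\alpha_-<0$) and $U_{\xi,b}$ is the diagonal unitary matrix whose $j$th entry is $\eD^{\mp\iD(\xi X+b\int_0^X\alpha_j^{-1})}$, the sign matching the $\pm$-block of characteristics. This separates modulus (governed by $a$) from phase (encoded by $(\xi,b)$). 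The key preliminary is: under \eqref{ratcond}, the orbit $\{U_{\xi,b}\,;\,(\xi,b)\in\R^2\}$ is dense in the group of diagonal unitary matrices on $\C^{n-1}$, and remains dense even when $|b|$ is restricted to be arbitrarily large (by translation invariance of the orbit closure inside a torus). By Kronecker's density theorem, density amounts to the absence of $k\in\Z^{n-1}\setminus\{0\}$ with both $\sum_j\epsilon_j k_j=0$ and $\sum_j\epsilon_j k_j\int_0^X\alpha_j^{-1}=0$, where $\epsilon_j\in\{\pm1\}$ matches the block decomposition. The second identity alone already forces $k=0$ by the rational independence of $(\int_0^X\alpha_j^{-1})_j$ contained in \eqref{ratcond}.

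For $(2)\Rightarrow(1)$, assume (2). The map $(a,U)\mapsto|\det(D_aUB-\Id)|$ is continuous on the compact set $[0,+\infty]\times\{U\text{ diagonal unitary}\}$, extending continuously to $a=+\infty$ via $D_\infty=0$ and limiting value $1$. Under (2) it never vanishes: if $\det(D_{a_0}U_0B-\Id)=0$, then $1$ would be an eigenvalue of $D_{a_0}U_0B$, giving $\rho(U_0D_{a_0}B)=\rho(D_{a_0}U_0B)\geq 1$, contradicting (2). Hence it admits a strictly positive minimum, and continuity in $a$ extends this lower bound to a strip $a\in[-\eta,+\infty]$ for some $\eta>0$. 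Specializing $U=U_{\xi,b}$ yields (1).

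For $(1)\Rightarrow(2)$, assume (1). Fix $a\in\R_+$ and any diagonal unitary $V$; by the density claim with $|b|$ large, pick $(\xi_k,b_k)$ with $|b_k|\to+\infty$ and $U_{\xi_k,b_k}\to V$. Then $\lambda_k:=a+\iD b_k$ eventually satisfies $|\lambda_k|\geq M$ and $\Re\lambda_k=a\geq 0\geq-\eta$, so passing to the limit in (1) gives $|\det(D_aVB-\Id)|\geq c$. In particular $1$ is not an eigenvalue of $D_aVB$ for any such $a$ and $V$. Suppose now, for contradiction, that $\rho(\tilde U\,D_{a_0}B)\geq 1$ for some $a_0\in\R_+$ and some diagonal unitary $\tilde U$. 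Since $\rho(\tilde U\,D_aB)$ depends continuously on $a$ and tends to $0$ as $a\to+\infty$, some $a_1\geq a_0$ yields $\rho(\tilde U\,D_{a_1}B)=1$. Picking an eigenvalue $\mu$ of $D_{a_1}\tilde U\,B$ with $|\mu|=1$ and setting $V:=\mu^{-1}\tilde U$ (still diagonal unitary) makes $1$ an eigenvalue of $D_{a_1}VB$, contradicting the previous step.

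The only delicate point is the density statement in the first paragraph, which is precisely where \eqref{ratcond} enters; the direction $(2)\Rightarrow(1)$ in fact uses no rational independence at all. The remaining ingredients are soft continuity/compactness arguments, combined with the elementary trick of rotating a modulus-one eigenvalue to the value $1$ by multiplying $\tilde U$ by a scalar of modulus one, a freedom available precisely because the unitary factor is free within the full diagonal-unitary group.
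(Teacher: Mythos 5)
Your proof is correct and takes essentially the same route as the paper's: Kronecker density for the unitary factor plus the observation relating ``$1\notin\sigma(UB_{a,0})$'' to ``$\rho(UB_{a,0})<1$''. The paper's proof, however, is compressed into two sentences and states the second ingredient as though it were a pointwise-in-$a$ equivalence, which it is not: for a \emph{fixed} $a$, ``$1\notin\sigma(UB_{a,0})$ for all diagonal unitary $U$'' does not imply ``$\rho(UB_{a,0})<1$ for all $U$'', since multiplying $U$ by a unimodular scalar rotates the spectrum but cannot change moduli. What makes the equivalence hold is precisely the freedom to vary $a$ over all of $\R_+$ together with $D_a\to 0$ as $a\to+\infty$; you make this explicit via the continuity-in-$a$ argument followed by the scalar rotation bringing a modulus-one eigenvalue to $1$, which is the step implicitly needed in the paper. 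You also correctly isolate that the direction $(2)\Rightarrow(1)$ uses no rational independence at all, only compactness of $[0,+\infty]\times\{U\}$ and specialization $U=U_{\xi,b}$, and that the Kronecker density (with $|b|$ large, surviving the removal of a compact window in $b$) is needed only for $(1)\Rightarrow(2)$. So the substance matches the paper, but your write-up supplies non-trivial details that the published proof glosses over, notably the $a$-dependent rotation trick.
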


\begin{proof}
Kronecker's theorem implies that from \eqref{ratcond} stems that for any $a$ and $M$, $B_{a+\iD \zeta,\xi}$ for $(\xi,\zeta)\in\R^2$, $|\zeta|\geq M$ is a dense subset of the set of all $U\,B_{a,0}$. This implies the lemma when combined with the observation derived by sending $a\to+\infty$ that $1\notin \sigma(U\,B_{a,0})$ for any $a$ and $U$ is the same as $\rho(U\,B_{a,0})<1$ for any $a$ and $U$.
\end{proof}

\br \label{ratrmk}
We note that in any case the second condition of Lemma~\ref{Ulem} implies the first. Absent \eqref{ratcond}, however, the reverse may fail, even in the simpler-looking unidirectional case that $m=0$ or $m=n-1$. When the first condition fails to imply the second it is far from implying any form of high-frequency damping estimate. 

Note that in their nonlinear form (not provided here) high-frequency damping estimates must hold for all nearby systems. Hence they are expected to require a generic form of high-frequency spectral gap.
\er

We retain from the high-frequency gap condition only the $a=0$ condition that
\be\label{rat}
\max_{U}\rho(UB)\,<\,1\,.
\ee

\subsection{Linear damping estimate}

Our goal is now to prove a linear damping estimate under the condition 
\be\label{sat}
\inf_{S}\|SBS^{-1}\|\,<\,1\,,
\ee
where again $S$ runs over diagonal, positive matrices. We recall that in any case \eqref{sat} implies \eqref{rat} and that we have proved that, when $n-1\leq5$ i.e. $n\leq 6$, they are equivalent. Obviously our interest for the general linear algebraic equivalent arose the other way around and condition \eqref{sat} appears naturally in the design of a high-frequency damping estimate.

We introduce 
\be \label{lin}
A_0 \d_tw + \d_x(Aw)- Ew=F,\qquad\textrm{on }\tRR 
\ee
together with linearized jump conditions 
\be\label{jump}
\frac{\dD\ y}{\dD t}\,[f_0(\bar W)]-y [R(\bar W)]+ [Aw]=G\,,\qquad\textrm{on }X\Z\,,
\ee
where $F$ and $G$ are given and $y$ is an unkown on $X\Z$.

\bpr\label{ldamping}
Assume the structural assumptions and \eqref{sat}, which holds under the high-frequency spectral stability condition \eqref{rat} when $n\leq 6$. Let $k\in\N$ be such that \eqref{kreg} holds. For some $\theta>0$ and $C$, if $w$ solves \eqref{lin}-\eqref{jump} (for some shift $y$) on a time interval $[0,T_0]$ with $w(0,\cdot)=w_0$ then for all $0\leq t\leq T_0$
\begin{align*}
\|w(\cdot,t)\|_{{H^k(\tRR)}}^2
&\leq C\,\eD^{-\theta t} \|w_0\|_{H^k(\tRR)}^2\\
&\quad+C\int_0^t \eD^{-\theta(t-\tau)}\left(\sum_{\ell=0}^{k-1}\|\d_t^\ell F(\cdot,\tau)\|_{H^{k-\ell}(\tRR)}^2
 +\sum_{\ell=0}^k\|\d_t^\ell G(\cdot,t)\|_{\ell^2(X\Z)}^2\right)\, \dD\tau
\\
&\quad+C\int_0^t \eD^{-\theta(t-\tau)}\big(\|w(\cdot,\tau)\|_{L^2(\tRR)}^2
 + \|y(\cdot,\tau)\|_{\ell^2(X\Z)}^2
 \big)\, \dD\tau.
\end{align*}
\epr

\begin{proof}
To begin with we replace $w$ with $u=T^{-1}w$. Let us first observe that for any $j$
\[
\bp u_+(jX^+,t)\\-u_-(jX^-,t)\ep\,=\,C\,\bp u_+(jX^-,t)\\-u_-(jX^+,t)\ep+\lot
\]
so that by first taking time derivatives and then using the interior equations
\[
\bp (\alpha_+^k\d_x^ku_+)(jX^+,t)\\-(\alpha_-^k\d_x^ku_-)(jX^-,t)\ep\,=\,C\,\bp (\alpha_+^k\d_x^ku_+)(jX^-,t)\\-(\alpha_-^k\d_x^ku_-)(jX^+,t)\ep+\lot\,.
\]
Note that time derivatives of $F$ appear only in the estimates of the latter $\lot$. This is to be combined with
\[
\d_t(\d_x^ku)+\alpha\d_x(\d_x^k u)=-(\gamma+k\,\alpha'+\beta)\,\d_x^ku+\lot\,.
\]

In the above we have already started to consider as lower order terms the contributions of the characteristic modes to the boundary terms. Indeed the sonic part may be treated as in the Saint-Venant case by introducing in the quadratic formula
\[
\langle \Omega_{(s),(C_0)}\d_x^ku_{(s)},\d_x^ku_{(s)}\rangle_{L^2}
\]
with, for $x\in ((j-1)X,jX)$, 
\[
\Omega_{(s),(C_0)}(x)\,:=\,C_0 \exp\left(\int_{(j-1)X}^x \left( 
 (2k-1)\frac{\alpha_{(s)}'(y)-\alpha_{(s)}'(x_s)}{\alpha_{(s)}(y)} + 2\frac{\gamma_{(s)}(y)-\gamma_{(s)}(x_s)}{\alpha_{(s)}(y)}\right)\dD y\right)
\]
where $C_0$ is to be taken sufficiently large. Likewise in the end interior cross terms may be discarded as in the Saint-Venant case by adding to the quadratic functional 
\[
\langle \d_x^ku,K\d_x^{k-1}u\rangle_{L^2}+c_0\|u\|_{L^2}^2
\]
for a well-chosen Kawashima compensator $K$ and a sufficiently large $c_0$.

We may thus focus on transverse modes and do as if $\beta\equiv0$. The question is whether one may create an $\eps$-interior dissipation while deriving dissipative boundary conditions. As in the Saint-Venant case it is sufficient to check that one may obtain strict boundary dissipativity in the $0$-interior dissipation case. Thus for $1\leq \ell\leq n-1$, we introduce for $x\in ((j-1)X,jX)$, 
 \[
\Omega_{\ell,(0),\sigma_\ell}(x)\,:=\,\sigma_\ell\,|\alpha_\ell|^{2k-1}(x)\eD^{\int_{(j-1)X}^x \frac{2\gamma_\ell(y)}{\alpha_\ell(y)}\,\dD y}\,,
\]
to be inserted in 
\[
\langle \Omega_{\ell,(0),\sigma_\ell}\d_x^ku_\ell,\d_x^ku_\ell\rangle_{L^2}
\]
and we note that its main contribution to boundary terms, from which it differs only by $\lot$, is 
\begin{align*}
\frac12\sum_{j\in\Z}&\left((\alpha_\ell \Omega_{\ell,(0),\sigma_\ell} (\d_x^k u_\ell)^2) (jX^-) 
-(\alpha_\ell \Omega_{\ell,(0),\sigma_\ell} (\d_x^k u_\ell)^2) ((j-1)X^+)
\right)\\
&=\frac12\sign(\alpha_\ell)\sum_{j\in\Z}\left(\sigma_\ell\eD^{2\int_0^X \frac{\gamma_\ell}{\alpha_\ell}} ((\alpha_\ell\d_x)^k u_\ell)^2 (jX^-) 
-\sigma_\ell((\alpha_\ell\d_x)^k u_\ell)^2(jX^+)
\right)
\end{align*}
(where we recall that $\alpha_\ell$ has constant sign). This suggest to search for $\sigma_\ell$ under the form
\[
\diag(\sigma_1,\cdots,\sigma_{n-1})
\,=\,S^2\,\diag(\eD^{-2\int_0^X\gamma_+\alpha_+^{-1}},\Id)\,.
\]
with $S$ diagonal and positive. Then summing over $1\leq \ell\leq n-1$ and using boundary equations leave as main boundary contribution
\begin{align*}
\frac12\sum_{j\in\Z}&
\left\langle\bp (\alpha_+^k\d_x^ku_+)(jX^-)\\-(\alpha_-^k\d_x^ku_-)(jX^+)\ep,
\left(S^2-B^TS^2B\right)
\bp (\alpha_+^k\d_x^ku_+)(jX^-)\\-(\alpha_-^k\d_x^ku_-)(jX^+)\ep\right\rangle\\
&=\frac12\sum_{j\in\Z}
\left(\left|S\bp (\alpha_+^k\d_x^ku_+)(jX^-)\\-(\alpha_-^k\d_x^ku_-)(jX^+)\ep\right|^2
-\left|SBS^{-1}\,S\bp (\alpha_+^k\d_x^ku_+)(jX^-)\\-(\alpha_-^k\d_x^ku_-)(jX^+)\ep\right|^2
\right)
\end{align*}
so that it is sufficient to choose $S$ such that $\|SBS^{-1}\|<1$.
\end{proof}

\section{Discussion and open problems}\label{s:disc}

As noted at the beginning of Section \ref{s:general}, the models (RG3) and (RG4) introduced by 
Richard and Gavrilyuk \cite{RG1} as refinements of (SV) are $3\times 3$ and $4\times 4$ models \eqref{blaw},
both satisfying the dimensional requirement $n\leq 6$. The system (RG3) is strictly hyperbolic with characteristics $u-c, u\pm c_s- c$, where $c$ and $c_s$ are respectively wave and sound speeds. Thus, spectrally stable roll waves of (RG3) indeed fall (generically) under the purview of Proposition~\ref{ldamping}. The full model (RG4) is however nonstrictly hyperbolic, possessing an additional characteristic mode with speed $u-c$, hence does not fall directly under the present linearized analysis; however, we suspect it could be treated similarly with additional effort. More generally we expect extension of our analysis to non strictly hyperbolic systems to be both within our reach and of practical relevance. 

An ultimate goal is of course a full nonlinear stability result for roll waves of (SV), (RG3), or (RG4), completing the spectral analysis of \cite{JNRYZ,RYZ3} by establishing that spectral stability implies nonlinear stability. As observed in \cite{JNRYZ,RYZ3}, this is problematic for (SV) and (RG4) due to infinite-dimensional manifolds of nearby stationary solutions, a degenerate situation that apparently requires special handling. For (RG3) on the other hand, there is no such degeneracy, and the spectral picture appears much like that of the viscous case already treated in \cite{JZN,JNRZ13}. The investigation of this problem would be a most interesting direction for future investigation.

A more systematic treatment of high-frequency damping estimates would also be welcome. For our present class of systems the basic open problem is whether in cases where there does exist a gap between high-frequency
stability and existence of the energy estimates developed here one might be able to obtain high-frequency damping estimates by some other techniques. We also mention as an interesting direction the development of a general theory for viscous periodic waves, beyond the Saint-Venant analysis of \cite{RZ}.

An interesting open problem on the linear algebraic side is to find an example of gap for the real-valued case in dimension $n=6$. It would also be of interest, but maybe out of reach, to derive some statistical description of the gap for some ensemble of random matrices so as to elucidate the observations of Remark~\ref{gaprmk}.

\appendix

\section{Counterexample in $\cM_4(\C)$, and extension to $\cM_5(\R)$}\label{s:ceg3} 
In this appendix, we complete the main remaining parts of the proof of Lemma~\ref{linalg_lem} by providing a complex counterexample for dimension $n=4$, and the sharpened bound $n\leq 5$ for the real case.

\subsection{Extension to $\cM_5(\R)$}\label{s:ag}

We begin with the real extension. The overall strategy is the same as in the complex case. The parts checking continuity of the gap in the irreducible case and reducing to the irreducible case are identical. We need to modify the density argument and extend the analysis of the ``simple'' case to some low-multiplicity multiple cases. 

\medskip
\emph{Density argument for full matrices.} We investigate when is dense the set of matrices $B$ such that the infimum of $S\mapsto \|SBS^{-1}\|$ is reached at a $S_*$ such that the largest eigenvalue of $(S_*BS_*^{-1})^*S_*BS_*^{-1}$ has multiplicity at most $m_0$. We do so by examining the orbits under $S\mapsto S B S^{-1}$ of matrices $B$ such that $\rho(B)^2$ is a multiple eigenvalue of $B^*B$ with multiplicity $m>m_0$.  

The space $\cM_n(\R)$ has real dimension $n^2$. On the other hand, as we detail below, by singular-value decomposition $B=LDR^*$, $R$, $L$ orthogonal matrices and $D$ diagonal, real nonnegative, the set of real matrices $B$ for which $B^*B$ has a $m$-tuple largest eigenvalue in $D$ has dimension (at most) $n^2+1-m(m+1)/2$, hence the set of all scalings $SBS^{-1}$ (taking without loss of generality $S_{11}=1$) of such a $B$ has dimension (at most) $n-1$ higher, or (at most) $n^2+ n-m(m+1)$. Indeed the sets of orthogonal matrices $R$ and $L$ have dimensions $n(n-1)/2$ apiece, accounting for $n(n-1)$ degrees of freedom.  Meanwhile, multiplication on the right by the same orthogonal $m\times m$ matrix of the $n\times m$ block of columns associated with the repeated eigenvalue also leaves the  singular-value decomposition unchanged, corresponding to an overcount of $m(m-1)/2$. Subtracting these values from the count, and adding the $(n-m)+1$ parameters corresponding to the entries of $D$, and the $n-1$ parameters corresponding to different scalings $S$ (again, setting the upper left entry to $1$ without loss of generality), we obtain finally (at most) $n(n-1)-m(m-1)/2+(n-m+1)+(n-1)=n^2+n-m(m+1)/2$ as claimed.

We prove below the absence of gap for full real matrices such that the infimum of $S\mapsto \|SBS^{-1}\|$ is reached at a $S_*$ such that the largest eigenvalue of $(S_*BS_*^{-1})^*S_*BS_*^{-1}$ has multiplicity less than $2$. The present density argument extends it to dimensions $n$ such that $n<m(m+1)/2$ when $m\geq 3$, thus to $n\leq 5$ as announced.

\medskip
{\em Double eigenvalue case.}  Before discussing the multiplicity two specifically, we first extend the discussion of the simple case to the multiple case. 

To understand how $S\mapsto \|SBS^{-1}\|$ behaves near the identity matrix, we introduce the following quadratic forms: we denote $Q_j$, $j=1,\cdots,n$, the restriction to $\ker(B^*B-\|B\|^2\Id)$ of quadratic forms on $\C^n$ associated with matrices
\[
2(B^*E_jB-\|B\|^2\,E_j)\,,\quad j=1,\cdots,n\,.
\]
Note that $\sum_{j=1}^nQ_j$ is identically zero. We recall that one may always reduce the problem to the case when the infimum of $S\mapsto \|SBS^{-1}\|$ is taken at $S=\Id$.

It follows from the regularity of spectrum of self-adjoint matrices depending on a one-dimensional parameter that if $\ker(B^*B-\|B\|^2\Id)$ is of dimension $m$ then for any $S$ one may parameterize the $m$ top eigenvalues of $(\Id+\delta S)^{-1}\,B^*\,(\Id+\delta S)^2\,B\,(\Id+\delta S)^{-1}$ in such a way that they depend smoothly on $\delta$ and then their derivative at $\delta=0$ are the eigenvalues of $\sum_{j=1}^n s_j\,Q_j$. This immediately implies the following lemma.

\bl
If $S\mapsto \|SBS^{-1}\|$ admits a local minimum at the identity matrix then no real linear combination of the $Q_j$ is definite.
\el

The treatment of the $m=1$ case already contains the ingredients proving the following lemma.

\bl\label{proptest} 
The $Q_j$ possess a (possibly complex) nonzero common root if and only if there exists $U$ diagonal and complex such that
\[
\rho(U\,B)\,=\,\|B\|\,.
\]
\el

Note that the set of real quadratic forms on a space of dimension $2$ is of dimension $3$ so that $2$ is the maximal number of independent real quadratic forms on a space of dimension $2$ with no real linear combination that is definite. Therefore, to conclude the analysis of the $m=2$ case, that implies the $n\leq 5$ real case of Lemma~\ref{linalg_lem}, we only need to prove the following lemma.

\bl\label{2prop}
Two Hermitian quadratic forms on $\C^2$ either possess a real linear combination that is definite or a nonzero common complex root.
\el

\begin{proof}
Let $q_1$ and $q_2$ be two Hermitian quadratic forms such that there is no real linear combination of $q_1$ and $q_2$ that is strictly definite. Eliminating the trivial case where both forms are zero, changing coordinates and rescaling if necessary, one may assume that $q_1$ and $q_2$ are associated with 
\be\label{Qs}
\bp 1 & 0 \\0 & -a^2\ep, \qquad \bp b & c\\ \bar c & d\ep,
\ee
with $a,b,d$ real and $c$ complex. The condition that $\sigma q_1+q_2$ be indefinite for any $\sigma\in \R$, by Sylvester's criterion, is then that for any $\sigma\in\R$
\[
\det \bp \sigma+b & c \\ \bar c & -a^2\sigma + d\ep\leq 0,
\]
or, equivalently $ -a^2 \sigma^2 + (d-ba^2)\sigma+bd-|c|^2\leq 0$. This is equivalent to 
\be\label{indcrit}
(d+ba^2)^2 \leq 4a^2|c|^2\,.
\ee

On the other hand, the nonzero roots of $q_1$ are, evidently the nonzero multiples of $a$
\be\label{roots}
\bp a \\ \gamma \ep, \qquad |\gamma|=1.
\ee
Substituting \eqref{roots} into $q_2$, we obtain a common root if and only if  $ba^2 + 2\Re (ca \gamma)+ d=0$, or $-2\Re (ca \gamma)= d + ba^2$, which is solvable jointly with $|\gamma|=1$ precisely if $2|ca|\geq |d+ b a^2|$, or \eqref{indcrit}.
\end{proof}

\begin{corollary}\label{real2prop}
Given \emph{any} collection of real quadratic forms $q_k$, $k=1,\cdots,n$ on $\C^2$, either there is a nontrivial real linear combination that is strictly definite, or else there is a nontrivial complex common root.
\end{corollary}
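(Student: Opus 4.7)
Both alternatives depend only on the real span $W:=\mathrm{span}_\R\{q_1,\dots,q_n\}$ inside the four-dimensional real vector space of Hermitian $2\times 2$ matrices: a strictly definite real linear combination is exactly a strictly definite element of $W$, and a common nontrivial complex root of the $q_k$ is exactly a common nontrivial complex root shared by every element of $W$. I would therefore prove the dichotomy at the level of $W$ by induction on $d:=\dim_\R W$, using Lemma~\ref{2prop} as the pivotal base case. The low-dimensional cases are short: for $d=0$ any nonzero vector is a common root; for $d=1$ a generator is a single Hermitian form on $\C^2$, which is either definite or has a nontrivial root; for $d=2$ Lemma~\ref{2prop} applied to a basis of $W$ delivers one of the two alternatives, and a common root of a basis is automatically a common root of every element of $W$, hence of every $q_k$.

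The substantive case is $d\geq 3$. Here I would exploit the natural Lorentzian structure on the ambient space provided by the form $-\det$, which has signature $(3,1)$: strictly definite matrices fill the two open timelike cones, so a subspace containing no strictly definite element is confined to the closed causal complement. A three-dimensional such subspace is either null (a tangent hyperplane to the light cone along one of its rays) or spacelike, and each configuration is rigid. In the null case, the distinguished null ray of $W$ is spanned by a rank-one semidefinite matrix of the form $\pm vv^*$, and one can hope to deduce, from the relative dimensions of $W$ and of the Hermitian matrices annihilating $v$, that this $v$ is a common root of all of $W$. In the spacelike case, one would select a two-dimensional $W'\subseteq W$ on which Lemma~\ref{2prop} produces a common complex root $v$ and then attempt to extend $v$ to a common root of the remaining generators by varying the pair within the pencil.

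The main obstacle I anticipate is exactly this extension step in the spacelike sub-case: upgrading a common root of a suitable pair to a common root of the full collection does not follow formally from Lemma~\ref{2prop}, since the finite set of common roots of two indefinite Hermitian forms on $\C^2$ can generically fail to include any root of a third such form. I expect the argument to close either by carefully parameterizing these common roots and invoking the rigidity of three-dimensional spacelike subspaces in $\R^{3,1}$ (to re-apply Lemma~\ref{2prop} to a different pair and iterate), or by exploiting a structural identity inherited from the intended application---notably the constraint $\sum_j Q_j=0$ satisfied by the $Q_j$ arising in Lemma~\ref{linalg_lem}---to rule out the problematic spacelike configuration in the first place.
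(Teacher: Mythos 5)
The ambient space you should be working in is not the four-dimensional space of Hermitian $2\times 2$ matrices but the three-dimensional space of \emph{real symmetric} $2\times 2$ matrices: the $q_k$ in Corollary~\ref{real2prop} are \emph{real} quadratic forms (their matrices are real symmetric, acting as Hermitian forms on $\C^2$), and this is exactly the space of dimension $m(m+1)/2=3$ that the surrounding discussion counts. Once you place $W=\Span_\R\{q_1,\dots,q_n\}$ inside this three-dimensional space, your ``substantive case'' $d\geq 3$ collapses instantly: $W$ is then the whole space of real symmetric forms and therefore contains the identity, a strictly definite combination. There is no null/spacelike case analysis to carry out, and the correct Lorentzian structure would in any case be $\R^{2,1}$, not $\R^{3,1}$. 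The paper's proof is nothing more than this dimension count for $d\geq 3$, combined with Lemma~\ref{2prop} applied to a basis when $d=2$ and the trivial cases $d\leq 1$, together with the remark that a common root of a spanning set is a common root of all of $W$.

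Your concern about the three-dimensional case is in fact well placed --- but in the \emph{complex} Hermitian setting, where the ambient space really is four-dimensional. There a three-dimensional $W$ can avoid both alternatives, and the paper exhibits precisely such a $W$ at the start of Appendix~\ref{s:xceg} (the $q_1,q_2,q_3$ with complex off-diagonal entries), using it to build the gap example in $\cM_4(\C)$. So the ``extension step'' you flag is not an obstacle to be engineered around: in the four-dimensional setting the statement is simply false, and realness of the forms is exactly what rescues the corollary by cutting the dimension to three. The constraint $\sum_j Q_j=0$ would not help either, since the corollary is stated (and used) for arbitrary collections, not only those arising from Lemma~\ref{linalg_lem}.
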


\begin{proof}
As noted above, the dimension of the space of real quadratic forms acting on dimension $m$ is $m(m+1)/2$, thus in the case $m=2$, dimension $3$. Therefore, if there are more than two independent forms, then some linear combination gives the identity, a positive definite matrix. On the other hand, if there are just two independent forms, no linear combination of which is strictly definite, then by Lemma~\ref{2prop} they have a nontrivial common root. But in this case, all other quadratic forms, being linear combinations of these first two, share the same root.
\end{proof}

{\it This completes the proof of Lemma~\ref{linalg_lem} in the positive direction for the real-valued case.}

\subsection{Counterexample in $\cM_4(\C)$}\label{s:xceg}

{\em Quadratic forms.} In Lemma~\ref{2prop} we did not make the restriction to real quadratic forms. However we did use the real character in  Corollary~\ref{real2prop} to reduce the $m=2$ case to the consideration of two quadratic forms. Indeed the set of complex Hermitian quadratic forms on a space of dimension $2$ is of dimension $4$, leaving open the possibility that there could be three quadratic forms on $\C^2$ with no definite real linear combination and no nonzero common complex root. 

This is indeed the case, as can be seen by the explicit counterexample of quadratic forms $q_1$, $q_2$ and $q_3$ associated with matrices
\[
\bp 1 & 0\\ 0 & -1\ep, \quad
\bp 0 & 1\\ 1 & 0 \ep, \quad
\bp 0 & \iD\\ -\iD & 0 \ep.
\]
For the determinant of the matrix associated with $\sum_{k} c_k q_k$ is
\[
\det
\bp c_1 & c_2+ \iD c_3\\ c_2-\iD c_3 & -c_1\ep= -(c_1^2+c_2^2+c_3^2)< 0,
\]
whence every real combination is indefinite. Yet, direct computation shows that the unique common root of $q_1$, $q_2$, and $q_3$ is zero since the equations to satisfy for $x=(x_1,x_2)$ are $|x_1|=|x_2|$, $\Re(\bar{x_1}x_2)=0$, $\Im(\bar{x_1}x_2)=0$. 

This shows that a complex analogous of Corollary~\ref{real2prop}, leaving open the possible open the possibility to design a gap example for elements of $\cM_4(\C)$, as announced in Lemma~\ref{linalg_lem}.

We now design such an example by elaborating on the above quadratic forms. Consider the matrix $B:=LR^* \in \cM_4(\C)$, with $R$ and $L$ elements of $\cM_{4,2}$ having each orthonormal columns $(R_1,R_2)$ and $(L_1,L_2)$, explicitly given by 
\be\label{RY}
R= \frac12\bp 1 & 0\\1&1\\1&\iD\\1& -\iD-1\ep,\qquad
L= \frac12\bp  0&1\\1&-1\\1&-\iD\\ -\iD+1& 1\ep\,.
\ee
By design, $\|B\|=1$, $B^*B$ have eigenvalues $1$ and $0$, both with multiplicity $2$, and the columns of $R$ may be used as a basis of $\ker(B^*B-\|B\|^2\Id)$. When doing so, the quadratic forms $Q_j$, $j=1,\cdots,4$, appearing in the first variation of $S\mapsto \|S B S^{-1}\|$ at $S=\Id$, may be identified with their matrices given as
\[
\bp \,2\,\left(\,\overline{L_{j,k}}L_{j,\ell}-\overline{R_{j,k}}R_{j,\ell}\,\right)\,\ep_{k,\ell=1,2}
\]
and computed to be
\be\label{x2}
Q_1=-\frac14\bp 1 & 0\\ 0 & -1\ep, \quad
Q_2=-\frac14\bp 0 & 2\\ 2 & 0 \ep, \quad
Q_3=-\frac14\bp 0 & 2\iD\\ 2\iD & 0 \ep\,.
\ee
and $Q_4=-(Q_1+Q_2+Q_3)$.

We already know that for any $S$, $\delta\mapsto \|(\Id +\delta S) B (\Id +\delta S)^{-1}\|$ admits a local minimum at $\delta=0$ and by Lemma~\ref{proptest} that
\[
\max_U \rho(UB)<\|B\|\,.
\]

{\em Global minimization.} There only remains to prove that $S\mapsto \|S B S^{-1}\|$ admits a unique global minimizer at $S=\Id$. To confirm this, we introduce
\[
R_S:= S^{-1}R\,, \qquad L_S:= SL\,,
\]
so that $B_S:=SBS^{-1}= L_S R_S^*$. Note that any eigenvector of $B_S^*B_S$ belongs to the range of $R_S$ that $R_S$ is one-to-one and that
\[
R_S^* B_S^* B_S R_S\,=\,(R_S^*R_S)\,(L_S^*L_S)\,(R_S^*R_S)\,.
\] 
Therefore we only need to prove that for any $S$ the largest eigenvalue of $(L_S^*L_S)(R_S^* R_S)$ has value at least $1$. We recall that we already know that all the eigenvalues are real and nonnegative (since they coincide with those of $B_S^*B_S$) thus it is sufficient to prove that the trace equals at least $2$.

Taking without loss of generality $S=\diag(s_1, s_2, s_3, s_4)$, with $s_4=1$, we find after a brief calculation that
\begin{align*}
R_S^*R_S=\bp\sum_{j=1}^4  s_j^{-2}\,\overline{R_{j,k}}\,R_{j,\ell}\ep_{k,\ell}
&=\frac14\bp s_1^{-2}+ s_2^{-2} + s_3^{-2} + 1 & s_2^{-2} -\iD s_3^{-2} + \iD -1\\
s_2^{-2}+ \iD s_3^{-2} -\iD-1 & s_2^{-2} + s_3^{-2} + 2 \ep\\
L_S^*L_S=\bp\sum_{j=1}^4  s_j^2\,\overline{R_{j,k}}\,R_{j,\ell}\ep_{k,\ell}
&=\frac14\bp s_2^2 + s_3^2 + 2 & -s_2^2 + \iD s_3^2 + 1-\iD\\
 -s_2^2 -\iD s_3^2 + 1+\iD& s_1^2 + s_2^2 + s_3^2 + 1 \ep
\end{align*}
giving
\begin{align*}
\Tr&((L_S^*L_S)(R_S^* R_S))
=((L_S^*L_S)\bp1\\0\ep)^*(R_S^*R_S)\bp1\\0\ep
+((L_S^*L_S)\bp0\\1\ep)^*(R_S^*R_S)\bp0\\1\ep\\
&=\frac{1}{16}
\Big(
(s_1^{-2}+ s_2^{-2} + s_3^{-2} + 1)(s_2^2 + s_3^2 + 2)
+(s_1^2 + s_2^2 + s_3^2 + 1)(s_1^{-2}+ s_2^{-2} + s_3^{-2} + 1)\\
&\qquad-2(s_2^2-1)(s_2^{-2}-1)-2(s_3^2-1)(s_3^{-2}-1)
\Big)\,.
\end{align*}
We conclude that indeed $\Tr((L_S^*L_S)(R_S^* R_S))\geq 2$ by using repeatedly that for any $x>0$, $(x-1)(x^{-1}-1)\leq 0$ and $x+x^{-1}\geq 2$.

{\it This verifies global minimality, completing the final step of the proof of Lemma \ref{linalg_lem}.} Note that our proof gives no information on the size of the gap but our numerics show a gap of approximately $10 \%$.

\br\label{gaprmk}
Though the above example demonstrates that a nonzero gap can theoretically exist between high-frequency stability and the condition for existence of our energy estimates, numerical experiments choosing matrices at random suggest that this occurrence is not so frequent, even up to dimension $\C^7$; indeed, we were not able to find by numerical optimization starting from a random initialization existence of matrices with nonzero gap. This suggests that even though strictly stable multiple eigenvalue configurations occur on an open set, the size of this set, and of the resulting gap, may be relatively small, so in practice not an issue. However we must say that we expect gaps to be more frequent when the dimension increases whereas in this direction the optimization problems become harder to solve numerically and our numerics are less conclusive.
\er

\subsection{Towards a counterexample in $\cM_6(\R)$}

Though we did not obtain a complete example of a gap in $\cM_6(\R)$, we do provide some elements in this direction.

First we point out that there are five real quadratic forms on $\C^3$ with no definite real linear combination and no nonzero common complex root. This may be seen by considering the forms associated with
\begin{align*}
\bp 1&0&0\\0&-1&0\\0&0&0\ep&,&
\bp 0&1&0\\1&0&0\\0&0&0\ep&,&
\bp 0&0&1\\0&0&0\\1&0&0\ep&,&
\bp 0&0&0\\0&0&1\\0&1&0\ep&,&
\bp 0&0&0\\0&1&0\\0&0&-1\ep&.&
\end{align*}
Note that $5$ is indeed the maximal number of independent quadratic forms with no real definite real linear combination since the whole space of real quadratic forms has dimension $m(m+1)/2=6$ when $m=3$.

We have looked for $B:=LR^* \in \cM_6(\R)$, with $R$ and $L$ elements of $\cM_{6,3}$ having each orthonormal columns under the form 
\begin{align*}
R&=\bp x_1&y_1&0\\
\rho_2\cos(\theta_2)&\rho_2\sin(\theta_2)&0\\
x_3&y_3&z_3\\
\rho_4\cos(\theta_4)&\rho_4\sin(\theta_4)&z_4\\
\rho_5\cos(\theta_5)\cos(\varphi_5)&\rho_5\sin(\theta_5)\cos(\varphi_5)&\rho_5\sin(\varphi_5)
\ep,\\
L&= \bp y_1&x_1&0\\
\rho_2\cos(\theta_2')&\rho_2\sin(\theta_2')&0\\
-x_3&y_3&z_3\\
\rho_4\cos(\theta_4')&\rho_4\sin(\theta_4')&z_4\\
\rho_5\cos(\theta_5')\cos(\varphi_5')&\rho_5\sin(\theta_5')\cos(\varphi_5')&\rho_5\sin(\varphi_5')
\ep\,,
\end{align*}
with $x_1^2\neq y_1^2$, $\rho_2\neq0$, $x_3\,z_3\neq0$, $\rho_4\,z_4\sin(\theta_4)\neq \rho_4\,z_4\sin(\theta_4')$, $\rho_5\sin(\varphi_5)^2\neq \rho_5\sin(\varphi_5')^2$. The shape and the conditions ensure that the generated quadratic forms share the same span as the above explicit quadratic forms hence share the same properties. 

Note that the shapes of $R$ and $Y$ do not ensure orthonormality \emph{per se} but we have solved numerically for parameters to guarantee orthogonality. We did obtain numerical solutions, corresponding to a $B$ approximately given by
\[
\bp
0.14753503& 0.19982136& 0.00339269& 0.51926021& 0.00847797& 0.21926921\\
0.08321061& 0.13296559& 0.15631294& -0.1954354& -0.04654104& -0.21988828\\
-0.37549381& -0.02645794& 0.01837161& -0.39654006& 0.43179675& 0.49911196\\
-0.3001269& -0.27661858& -0.57464677& 0.35421452& -0.00469564& 0.39153639\\
0.32691344& -0.09925285& 0.51568898& 0.35643262& 0.22186063& 0.25403942
\ep\,.
\]
Our gap numerical tests for this matrix are however non conclusive. 

\section{Energy landscape for $\rho(UB)$}\label{s:UB}

Most of the discussion on the gap linear algebra problem is about the $\inf_S \|S BS^{-1}\|$ problem and the density of cases when a critical point of $S\mapsto\|S BS^{-1}\|$ is also a global minimizer.

It seems interesting to discuss also the energy landscape for the complementary problem $\max_U \rho(UB)$. The situation is dramatically different. There are always plenty of local minimizers that are not global minimizers. Obviously this complicates numerical optimization. Roughly speaking this is consistent with the intuition that the $S$-problem inherits some properties from the convexity of maps $s\mapsto s^{-1}$ and $s\mapsto s$, whereas the $U$-problem has a multiply periodic structure inherited from periodicity of $\theta\mapsto \eD^{\iD \theta}$.

To support this claim, we first provide a trivial example. The construction
\begin{align*}
B&=L\,R^*\,,&
R&=\bp 1\\ -1\ep\,,&
L&=\bp 1\\ 1\ep\,,&
\end{align*}
gives a simple $2\times 2$ example of a matrix for which $U=\Id$ is a local minimum of $U\mapsto \rho(UB)$ since
\[
\rho(\diag(1,\eD^{\iD \theta})B)\,=\,|1-\eD^{-\iD \theta}|
=\sqrt{2(1-\cos(\theta))}\,.
\]

Now we illustrate that starting from dimension $3$ it is easy to produce examples of local maximum that are not global. Pick $\tilde B\in\cM_{n-1}(\C)$ such that
\[
\rho(\tilde B)<\max_U \rho(U \tilde B)\,.
\]
Then for any $r\in(\rho(\tilde B),\max_U \rho(U \tilde B))$
\[
B_r:=\bp r&0_{1,n-1}\\0_{n-1,1}&\tilde B\ep
\] 
is such that $U\mapsto \rho(U B_r)$ admits a local maximum with value $r$ at $U=\Id$ but a larger global maximal value, equal to $\max_U \rho(U \tilde B)$.

\newcommand{\etalchar}[1]{$^{#1}$}

\end{document}